\documentclass[12pt]{article}
\usepackage{graphicx} 
\usepackage[utf8]{inputenc}
\usepackage[T1]{fontenc}
\usepackage[letterpaper,margin=1in]{geometry}
\usepackage{enumerate}
\usepackage{amsmath,amssymb,amsthm}
\usepackage{xcolor}
\usepackage[colorlinks=true]{hyperref}
\usepackage{booktabs}
\usepackage[table]{xcolor}
\usepackage{algorithm, algorithmic,setspace}
\usepackage{subfigure}
\usepackage{multirow}
\newcommand{\ds}{\displaystyle}

\newcommand{\norm}[1]{\left\Vert#1\right\Vert}
\newcommand{\inner}[2]{\left\langle #1, #2 \right\rangle}

\newcommand{\paren}[1]{\left(#1\right)}
\newcommand{\parenc}[1]{\left[#1\right]}
\newcommand{\lag}{\mathcal{L}}
\newcommand{\R}{\mathbb{R}}
\newcommand{\N}{\mathbb{N}}
\newcommand{\Rcupinf}{\R\cup\{+\infty\}}
\newcommand{\qbox}[1]{\quad\hbox{#1}\quad}
\newcommand{\qqbox}[1]{\qquad\hbox{#1}\qquad}
\newcommand{\cali}{\mathcal}

\DeclareMathOperator{\argmin}{argmin}
\DeclareMathOperator{\prox}{prox}

\newtheorem{theorem}{Theorem}
\newtheorem{lemma}[theorem]{Lemma}
\newtheorem{proposition}[theorem]{Proposition}

\newtheorem{assumption}[theorem]{Assumption}

\newtheorem{remark}[theorem]{Remark}
\newtheorem{example}[theorem]{Example}

\usepackage{todonotes}

\allowdisplaybreaks

\title{Preconditioned primal-dual algorithms for saddle point problems: non-ergodic convergence rates}
\author{Huiyuan Guo$^{1}$ \and	Juan Jos\'e Maul\'en$^{2,3}$ \and
	Juan Peypouquet$^{1}$
}

\begin{document}
	
	\maketitle
	
	\begingroup

	\footnotetext[1]{\small Bernoulli Institute for Mathematics, Computer Science and Artificial Intelligence, University of Groningen, The Netherlands.  \texttt{hazel.guo@rug.nl}, \texttt{j.g.peypouquet@rug.nl}.}
	\footnotetext[2]{\small Instituto de Ciencias de la Ingeniería, Universidad de O’Higgins, Rancagua, Chile. \texttt{juan.maulen@postdoc.uoh.cl}.}
	\footnotetext[3]{\small Centro de Modelamiento Matem\'atico (CNRS IRL2807), Universidad de Chile, Santiago, Chile. }
	
	\endgroup

	\begin{abstract}
		We study a family of preconditioned primal dual algorithms for convex-concave saddle point problems by the dynamics introduced in \cite{apidopoulos2026preconditioned}.
		The proposed framework exploits the possible smooth + nonsmooth structure of the saddle point formulation. It includes, but is not limited to, linearly constrained convex optimization problems. The proposed antisymmetric preconditioners allow us to establish non ergodic convergence rates, accounting for possible computational errors in the implementation of the method. Finally, we present numerical experiments to indicate our well performed preconditioned primal dual algorithms.\\
		
		\textbf{Keywords:} Convex optimization $\cdot$ Saddle point problem $\cdot$ Primal dual algorithm
	\end{abstract}

	\section*{Introduction}
	
	Let $\mathcal{X}$ and $\mathcal{Y}$ be real Hilbert spaces, let $A:\mathcal{X}\to \mathcal{Y}$ be a bounded linear operator, and let $f:\mathcal X\to\Rcupinf$ and $g:\mathcal Y\to\Rcupinf$ be closed and convex. Consider the saddle point problem
	\begin{equation}\label{PD}
		\min_{x\in\mathcal X}\max_{y\in\mathcal Y}\lag(x,y),    
	\end{equation}
	where the {\it Lagrangian} $\lag$ is given by 
	$$\lag(x,y)=f(x)+\inner{Ax}{y}-g(y).$$
	This structure arises from various applications in machine learning \cite{bach2011convex}, PDEs \cite{attouch2014variational}, image processing \cite{chambolle2016introduction}, and optimal control \cite{o2013splitting}. Any convex optimization problem of the form
	\begin{equation} \label{P}
		\min_{x\in\mathcal X} f(x)+h(Ax),
	\end{equation}
	where $h$ is closed and convex, can be expressed as \eqref{PD} by setting $g=h^*$, the Fenchel conjugate of $h$. In particular, if $b\in\mathcal Y$ and $h$ is the indicator function of the set $\{b\}$, then \eqref{P} is the linearly constrained convex optimization problem
	\begin{equation} \label{COP}
		\min_{x\in\mathcal X}\big\{f(x):Ax=b\big\},
	\end{equation}
	with the corresponding Lagrangian $\lag (x,y)=f(x)+\inner{Ax-b}{y}$. The idea of introducing the constraints into the objective function by means of a coupling term dates back to Lagrange, but the actual implementation of the saddle point formulation \eqref{PD} in order to numerically approximate the solutions to \eqref{COP} was first proposed by Arrow and Hurwicz in \cite{hurwicz1951gradient}, and improved by Uzawa (see \cite{arrow1957gradient,arrow1958studies}). 
	
	{\bf The Primal-Dual Hybrid Gradient Algorithm.} The strategy outlined in \cite{hurwicz1951gradient} consists in approximately following the descent-ascent flow
	\begin{equation} \label{E:Arrow_Hurwicz_continuous}
		\left\{
		\begin{array}{rcl}
			-\dot x(t) & \in & \partial_x\lag\big(x(t),y(t)\big) \\
			\dot y(t) & \in & \partial_y\lag\big(x(t),y(t)\big)
		\end{array}
		\right.   \qquad\Longleftrightarrow\qquad 
		\left\{
		\begin{array}{rcl}
			\dot x(t) + \partial f\big(x(t)\big)+A^*y(t) & \ni & 0\\
			\dot y(t) + \partial g\big(y(t)\big)-Ax(t) & \ni & 0.
		\end{array}
		\right.   
	\end{equation}
	
	To this end, one can perform a semi-implicit discretization with step sizes $\sigma,\tau>0$ to obtain the {\it Primal Dual Hybrid Gradient} (PDHG) algorithm
	\[
	\left\{
	\begin{aligned}
		\frac{x_{k+1}-x_k}{\sigma} + \partial f(x_{k+1})+A^*y_k & \ni  0\\
		\frac{y_{k+1}-y_k}{\tau} + \partial g(y_{k+1})-Ax_{k+1} & \ni  0,
	\end{aligned}
	\right.   
	\]
	or, equivalently,
	\begin{equation} \label{E:Arrow_Hurwicz}
		\left\{
		\begin{aligned}
			x_{k+1} & = \argmin_{x\in\mathcal X}\left\{f(x)+\frac{1}{2\sigma}\left\|x-\big(x_k-\sigma A^*y_k\big)\right\|^2\right\}\\
			y_{k+1} & =  \argmin_{y\in\mathcal Y}\left\{g(y)+\frac{1}{2\tau}\left\|y-\big(y_k+\tau A^*x_{k+1}\big)\right\|^2\right\}.
		\end{aligned}
		\right.
	\end{equation}
	
	This approach was popularized in image processing by the seminal work of Zhu and Chan \cite{zhu2008efficient}, where a numerical exploration of a linear-quadratic instance of \eqref{P} was presented. Unfortunately, the Arrow-Hurwicz algorithm \eqref{E:Arrow_Hurwicz} is not guaranteed to converge in general, as shown in \cite{he2022convergence}. An extension to the PDHG algorithm, incorporating a momentum substep, was proposed by Chambolle and Pock in \cite{chambolle2011first}, namely
	\begin{equation}\label{CP}
		\left\{
		\begin{aligned}
			x_{k+1} & = \argmin_{x\in\mathcal X}\left\{f(x)+\frac{1}{2\sigma}\left\|x-\big(x_k-\sigma A^*y_k\big)\right\|^2\right\}  \\
			z_{k+1} & = x_{k+1}+\theta(x_{k+1}-x_{k}) \\
			y_{k+1} & =  \argmin_{y\in\mathcal Y}\left\{g(y)+\frac{1}{2\tau}\left\|y-\big(y_k+\tau A^*z_{k+1}\big)\right\|^2\right\},
		\end{aligned}
		\right.
	\end{equation}
	where $\theta\in(0,1]$. With $\theta=1$ and $\tau\sigma\|A\|^2<1$\footnote{These conditions have been extended in \cite{briceno2019projected,banert2026chambolle,upadhyaya2026chambolle}.}, the authors showed that the {\it averaged iterates} converge to a solution of \eqref{PD}, and that the primal-dual gap (a measure of optimality, see below) converges to zero at an {\it ergodic}
	rate of $\mathcal O(1/k)$. This family of algorithms (indexed by the parameter $\theta\in[0,1]$) is now popularly referred to as {\it the} PDHG algorithm. Thanks to its performance, and the existence of theoretical convergence guarantees, the PDHG algorithm has become a well-established method to solve problem \eqref{P}, and a canonical benchmark for competing algorithms. Independently, Condat \cite{condat2013primal} and V\~u \cite{vu2013splitting} proposed a forward-backward splitting extension to tackle problems with a {\it smooth $+$ nonsmooth} structure, and adding a relaxation substep. By expressing it as a particular case of Krasnosel'skii-Mann iterations \cite{Krasnoselskii,Mann}, the algorithm is proved to converge non-ergodically. Convergence rates, however, are still ergodic.
	
	{\bf Non-ergodic convergence rates.} Under (partial) strong convexity, non-ergodic convergence rates have been obtained in \cite{zhu2022new,he2025non}. For general convex functions, non-ergodic convergence rates have been obtained for {\it accelerated} variants of the PDHG algorithm \cite{boct2016inertial, tran2022unified, tran2020adaptive, tran2018smooth, tran2020non}, which include an inertial substep inspired by Nesterov's accelerated gradient method \cite{nesterov1983method}. Recent advances in first-order algorithms have increasingly leveraged augmented Lagrangian-based methods to establish non ergodic convergence guarantees for the iterates (see \cite{tran2019proximal, boct2023fast,tran2018augmented,tran2020non}).

	In the search for non-ergodic convergence rates, an alternative to Nesterov's acceleration, using {\it scalar} preconditioners instead, was proposed in \cite{luo2022primal}. Preconditioning techniques \cite{pock2011diagonal,liu2021acceleration} are known to improve the performance of PDHG methods. These consist in modifying the norms used in the proximal minimization steps in \eqref{CP}, according to the geometry induced by an appropriate elliptic {\it operator} (a positive definite {\it matrix} in the finite dimensional case). The idea is as follows: with $\theta=1$, we can rewrite \eqref{CP} as
	\[\begin{pmatrix}
		I & -\tau A^*\\
		-\sigma A & I 
	\end{pmatrix} \begin{pmatrix}
		\frac{x_{k+1}-x_{k}}{\sigma}\\ \frac{y_{k+1}-y_{k}}{\tau}  
	\end{pmatrix} +\begin{pmatrix}
		\partial f & A^*\\
		-A & \partial g
	\end{pmatrix}  \begin{pmatrix}
		x_{k+1}\\y_{k+1}
	\end{pmatrix}\ni \begin{pmatrix}
		0\\0
	\end{pmatrix},
	\]
	which we can generalize to
	\[\begin{pmatrix}
		\alpha_kI & \beta_k A^*\\
		\gamma_k A & \delta_kI 
	\end{pmatrix} \begin{pmatrix}
		x_{k+1}-x_{k}\\ y_{k+1}-y_{k}  
	\end{pmatrix} +\begin{pmatrix}
		\partial f & A^*\\
		-A & \partial g
	\end{pmatrix}  \begin{pmatrix}
		x_{k+1}\\y_{k+1}
	\end{pmatrix}\ni \begin{pmatrix}
		0\\0
	\end{pmatrix}
	\]
	(we have absorbed the step sizes into the parameter sequences, without loss of generality), and identify as a finite difference discretization of the differential inclusion
	\begin{equation}\label{eq:sist}
		\left\{\begin{array}{rcl}
			\alpha(t)\dot x(t) + \beta(t) A^*\dot y (t)+ \partial f(x(t)) + A^*y(t) & \ni & 0 \\
			\gamma(t)A\dot x(t) + \delta(t)\dot y(t) - Ax(t) + \partial g(y(t)) & \ni & 0.
		\end{array}\right. 
	\end{equation}
	In the context of problem \eqref{COP}, \cite{luo2022primal} propose a system in the form of \eqref{eq:sist} with $\beta(t)\equiv 0$ and $\gamma(t)\equiv-1$. The resulting inclusion, namely
	\[
	\left\{\begin{array}{rcl}
		\alpha(t)\dot x(t)+ \partial f(x(t)) + A^*y(t) & \ni & 0 \\
		-A\dot x(t) + \delta(t)\dot y(t) - Ax(t) + b & \ni & 0,
	\end{array}\right. 
	\]
	then provided inspiration for an algorithm of the form\footnote{We have renamed the parameters for ease of reading: $a_k,b_k,c_k$ depend on $\alpha_k,\delta_k$ and the step size of the discretization.}
	\[
	\left\{\begin{array}{rcl}
		x_{k+1}-x_k+a_k\partial f(x_{k+1}) + a_kA^*y_{k+1} & \ni & 0 \\
		y_{k+1} -y_k - b_k(Ax_k-b) - c_kA(x_{k+1}-x_k) & = & 0.
	\end{array}\right. 
	\]

	In general, this is not directly implementable, but the author of \cite{luo2022primal} proposed a semi-smooth Newton substep to approximate the iterate $(x_{k+1},y_{k+1})$ from $(x_k,y_k)$. These approximation errors are not taken into account in the convergence analysis, though. Now, as general as stated, the inclusion \eqref{eq:sist} was actually introduced in \cite{apidopoulos2026preconditioned}. Using an antisymmetric preconditioner (with $\beta+\gamma\equiv 0$, or even $\beta+\gamma\to 0$), they obtained non-ergodic convergence rates for the solutions of \eqref{eq:sist}. Their results hold without any assumptions on $f$ and $g$, and the choice of $\alpha,\beta,\delta$ does not depend on $f$, $g$ or $A$.

	{\bf Our contribution.} In this paper, we explore the algorithmic consequences of the preceding discussion. Based on antisymmetric preconditioners, we develop a class of algorithms that exploit the {\it smooth $+$ nonsmooth} structure {\it in the saddle point formulation} of the problem. The setting includes, but is not limited to, linearly constrained optimization problems. We then establish non-ergodic convergence rates for the proposed method. Our numerical simulations show the our algorithm is competitive, but also that it can further stabilize the iterates, in comparison with  
	the classical PDHG approach.
	
	The paper is organized as follows: Section \ref{s:main_result} introduces the problem formulation, the proposed preconditioned algorithm, and our main theoretical results. Section \ref{s:proof} details the proof of the central theorem and derives the explicit non-ergodic convergence rates. Section \ref{s:special_cases} discusses relevant particular instances and provides implementation details for the proposed iterations. Finally, Section \ref{s:numerics} validates the algorithm's properties through numerical simulations. By testing our approach on linearly constrained least squares, a linearly constrained $\ell_1+\ell_2$ problem, and image denoising via wavelets, we demonstrate remarkably stable convergence behavior and outperforms standard algorithms from the existing literature.

	\section{Algorithm and main result}\label{s:main_result}
	
	Consider functions $\Phi:\mathcal{X}\to \R$ and $\Gamma:\mathcal{Y}\to \R$ such that $\Phi=f+F$, $\Gamma=g+G$, where $F$ and $G$ are smooth convex functions with parameters $L_F$ and $L_G$, respectively, and $f$ and $g$ are proper, convex and lower semi continuous. Let $A:\mathcal{X} \to \mathcal{Y}$ a continuous linear operator. In this work, we are interested in the numerical approximation of solutions to the \textit{saddle point problem}
	\begin{equation}\label{eq:lag_phi_gamma}
		\min_{x \in \mathcal{X}}\max_{y\in \mathcal{Y}}\lag(x,y) :=\Phi(x)+\inner{Ax}{y}-\Gamma(y).
	\end{equation}
	We denote the set of {\it primal-dual} solutions by $S$. A point $(x^*,y^*)$ belongs to $S$ if, and only if, it satisfies the first order optimality condition:
	\begin{equation} \label{eq:optimality_condition}
		-A^*y^*\in\partial\Phi(x^*)=\nabla F(x^*)+\partial f(x^*)\qqbox{and}Ax^*\in\partial\Gamma(y^*)=\nabla G(y^*)+\partial g(y^*).
	\end{equation}
	
	\subsection{Primal-dual iterations}
	
	Following \cite{apidopoulos2026preconditioned}, we consider
	\begin{equation}\label{eq:sist_extended}
		\left\{\begin{array}{rcl}
			\alpha(t)\dot x(t) + \beta(t) A^*\dot y (t)+ \partial f(x(t)) +\nabla F(x(t))+ A^*y(t) & \ni & 0 \\
			-\beta(t)A\dot x(t) + \delta(t)\dot y(t) - Ax(t) + \partial g(y(t)) +\nabla G(y(t)) & \ni & 0,
		\end{array}\right. 
	\end{equation}
	a semi-implicit discretization of which gives 
	\begin{equation}\label{eq:alg1_composite}
		\left\{\begin{array}{rcl}
			\ds \alpha_k(x_{k+1}-x_k) +\beta_kA^* (y_{k+1}-y_k)+ \partial f(x_{k+1})+\nabla F(x_k)+ A^*y_{k+1} &  \ni & 0 \medskip \\
			\ds -\beta_kA(x_{k+1}-x_k) +\delta_k (y_{k+1}-y_k) - Ax_{k+1} +  \partial g(y_{k+1})+\nabla G(y_{k}) & \ni & 0.
		\end{array}\right.
	\end{equation}

	In matrix form, this can be expressed as
	\[
	\begin{pmatrix}
		\alpha_kI + \partial f & 0\\
		0 & \delta_kI + \partial g
	\end{pmatrix}  \begin{pmatrix}
		x_{k+1}\\y_{k+1}
	\end{pmatrix} +(\beta_k+1)\begin{pmatrix}
		0 & A^*\\
		- A & 0
	\end{pmatrix}  \begin{pmatrix}
		x_{k+1}\\y_{k+1}
	\end{pmatrix}\ni \begin{pmatrix}
		\tilde x_k \\ \tilde y_k
	\end{pmatrix},
		\]
		where
		\[
		\begin{pmatrix}
			\tilde x_{k} \\ \tilde y_{k}
		\end{pmatrix}
		= \begin{pmatrix}
			\alpha_kI - \nabla F & \beta_k A^*\\
			-\beta_k A & \delta_kI  - \nabla G
		\end{pmatrix} \begin{pmatrix}
			x_{k} \\ y_{k}
		\end{pmatrix} 
		\]
		At each iteration, one has to find a zero of a {\it strongly monotone operator} with a {\it diagonal $+$ skew} structure. In all generality, this iteration subproblem cannot be solved explicitly, but there are efficient algorithms to solve it approximately (especially, see \cite{briceno2011monotone+}, but also \cite{combettes2009iterative,aragon2019computing,dao2020computing,adly2019decomposition}). Therefore, we consider an inexact version of \eqref{eq:alg1_composite}, namely
		\begin{equation}\label{eq:alg1_composite_inexact}
			\left\{\begin{array}{rcl}
				\ds \alpha_k(x_{k+1}-x_k) +\beta_kA^* (y_{k+1}-y_k)+ \partial f(x_{k+1})+\nabla F(x_k)+ A^*y_{k+1} &  \ni & \varepsilon_{k+1} \medskip \\
				\ds -\beta_kA(x_{k+1}-x_k) +\delta_k (y_{k+1}-y_k) - Ax_{k+1} +  \partial g(y_{k+1})+\nabla G(y_{k}) & \ni & \epsilon_{k+1},
			\end{array}\right.
		\end{equation}
		where the vectors $\varepsilon_{k+1}\in\cali X$ and $\epsilon_{k+1}\in\cali Y$ are to be interpreted as approximation errors in the (possibly) inexact implementation of the algorithm. There are, however, relevant cases where the structure of the iterations becomes split. Some of these are discussed in Section \ref{s:special_cases}. For the convergence results, we keep the general form \eqref{eq:alg1_composite_inexact} for compactness.
		
		\begin{remark}\label{r:iterations_shift}
			Setting $\tilde\beta_k:=\beta_k+1$, we can equivalently write \eqref{eq:alg1_composite_inexact} as 
			\[
			\left\{\begin{array}{rcl}
				\ds \alpha_k(x_{k+1}-x_k) +\tilde\beta_kA^* (y_{k+1}-y_k)+ \partial f(x_{k+1})+\nabla F(x_k)+ A^*y_k &  \ni &  \varepsilon_{k+1} \medskip \\
				\ds -\tilde\beta_kA(x_{k+1}-x_k) +\delta_k (y_{k+1}-y_k) - Ax_k +  \partial g(y_{k+1})+\nabla G(y_{k}) & \ni & \epsilon_{k+1}.
			\end{array}\right.
			\]
		\end{remark}
		
		\begin{remark}
			One can also consider variations of \eqref{eq:alg1_composite_inexact} with an index mismatch, such as
			\begin{equation} \label{eq:alg_1_a}
				\left\{\begin{array}{rcl}
					\ds \alpha_k(x_{k+1}-x_k) +\beta_kA^* (y_{k+1}-y_k)+ \partial f(x_{k+1})+\nabla F(x_k)+ A^*y_{k+1} &  \ni & \varepsilon_{k+1} \medskip \\
					\ds -\beta_kA(x_{k+1}-x_k) +\delta_k (y_{k+1}-y_k) - Ax_k +  \partial g(y_{k+1})+\nabla G(y_{k}) & \ni & \epsilon_{k+1},
				\end{array}\right.    
			\end{equation}
			or 
			\begin{equation} \label{eq:alg_1_b}
				\left\{\begin{array}{rcl}
					\ds \alpha_k(x_{k+1}-x_k) +\beta_kA^* (y_{k+1}-y_k)+ \partial f(x_{k+1})+\nabla F(x_k)+ A^*y_k &  \ni & \varepsilon_{k+1} \medskip \\
					\ds -\beta_kA(x_{k+1}-x_k) +\delta_k (y_{k+1}-y_k) - Ax_{k+1} +  \partial g(y_{k+1})+\nabla G(y_{k}) & \ni & \epsilon_{k+1}.
				\end{array}\right.    
			\end{equation}
			They exhibit similar numerical behavior, and the convergence analysis is analogous, so we will omit it.
		\end{remark}
		
		\subsection{Non-ergodic convergence rate}
		
		Given $(x^*,y^*) \in S$, we define the \textit{primal-dual 
			gap} as 
		\begin{align*}
			\Delta_k  :&= \lag\paren{x_{k},y^*} -  \lag\paren{x^*,y_k} \\
			& = \big[\Phi(x_{k})+\inner{Ax_k}{y^*}-\Gamma(y^*)\big]-\big[\Phi(x^*)+\inner{Ax^*}{y_k}-\Gamma(y_k)\big] \\
			& =\big[\Phi(x_k)-\Phi(x^*)+\inner{A^*y^*}{x_k-x^*}\big]+\big[\Gamma(y_k)-\Gamma(y^*)-\inner{Ax^*}{y_k-y^*}\big]
		\end{align*}
		In view of the convexity of $\Phi$ and $\Gamma$ and the optimality conditions \eqref{eq:optimality_condition}, we have
		$$\Delta_k\ge \Phi(x_k)-\Phi(x^*)+\inner{A^*y^*}{x_k-x^*}\ge 0,$$
		$$\Delta_k\ge \Gamma(y_k)-\Gamma(y^*)-\inner{Ax^*}{y_k-y^*}\ge 0.$$
		In particular,
		
		\begin{equation}\label{eq:decay_Phi_Gamma}
			\begin{array}{rl}
				\Phi(x_k)-\Phi(x^*)+\inner{A^*y^*}{x_k-x^*}&=\mathcal O(\Delta_k), \medskip \\
				\Gamma(y_k)-\Gamma(y^*)-\inner{Ax^*}{y_k-y^*}&=\mathcal O(\Delta_k). \\
			\end{array}
		\end{equation}
		By lower-semicontinuity, if $\Delta_k\to 0$ as $k\to\infty$, then every weak subsequential limit point of $(x_k,y_k)$ is a primal-dual solution of \eqref{eq:lag_phi_gamma}.

		Our analysis relies on the energy function
		\[E_{k}:=\Delta_{k}+X_{k}(x^*)+Y_{k}(y^*),\]
		where
		\[X_{k}(x^*)=\frac{\tau_{k}}{2}\norm{x_k-x^*}^2 \quad\text{and}\quad Y_{k}(y^*)=\frac{\sigma_k}{2}\norm{y_k-y^*}^2,\]
		with $\tau_{k}:=\frac{\alpha_{k}}{\beta_{k}}$ and $\sigma_{k}:=\frac{\delta_{k}}{\beta_{k}}$. Note that $E_k$ is the sum of three nonnegative terms.
		
		We shall make the following assumption on the parameters:
		
		\begin{assumption} \label{a:a_composite_inexact} 
			The sequences $(\alpha_k)$, $(\beta_k)$ and $(\delta_k)$ are positive and there is $\theta\in(0,1)$ such that
			\[(1-\theta)\alpha_k \ge \dfrac{(\beta_{k}+1)L_F}{2\beta_k+1}, \quad (1-\theta)\delta_k \ge \dfrac{(\beta_{k}+1)L_G}{2\beta_k+1}, \quad \dfrac{\beta_{k+1}}{\alpha_{k+1}} \geq \dfrac{1+\beta_k}{\alpha_k}, \quad \dfrac{\beta_{k+1}}{\delta_{k+1}} \geq \dfrac{1+\beta_k}{\delta_k}.\]    
		\end{assumption}
		
		The rationale for Assumption \ref{a:a_composite_inexact} will become transparent later, in particular thanks to Lemma \ref{L:composite_inexact}. On the other hand, we can use $\theta=0$ in the error-free case.

		\begin{remark} \label{re:alg1}
			The first two inequalities in Assumption \ref{a:a_composite_inexact} imply that $\alpha_k(2\beta_k+1)-(\beta_k+1)L_F\ge \theta\alpha_k(2\beta_k+1)$ and $\delta_k(2\beta_k+1)-(\beta_k+1)L_G\ge \theta\delta_k(2\beta_k+1)$. In terms of $(\tau_k)$ and $(\sigma_k)$, the last two inequalities read $\left(1+\frac{1}{\beta_k}\right)\tau_{k+1} \le \tau_k$ and $\left(1+\frac{1}{\beta_k}\right)\sigma_{k+1} \le \sigma_k$, respectively.
		\end{remark}
		
		Our main theoretical result is the following:
		
		\begin{theorem} \label{T:implicit_inexact}
			Let $(x^*,y^*) \in S$, and let $(x_k,y_k)$ be generated by Algorithm \eqref{eq:alg1_composite_inexact} with parameters satisfying Assumption \ref{a:a_composite_inexact}. Then, 
			\begin{equation} \label{E:rate_inexact}
				\sqrt{E_{k+1}}\le \frac{1}{B_k}\left(\sqrt{E_0} + \left(1+\sqrt{1+\frac{1}{2\theta}}\right)\sum_{j=0}^k\beta_jB_jR_j\right),
			\end{equation}
			where
			$$B_j:=\prod_{i=0}^j\sqrt{1+\frac{1}{\beta_i}}.$$
			In particular, we have the following:
			\begin{itemize}
				\item [i)] If $(1/\beta_k)\notin\ell^1$ and $(\beta_kR_k)\in\ell^1$, then $\lim_{k\to\infty}\Delta_k=\lim_{k\to\infty}E_k=0$.
				\item [ii)] If $(\beta_kB_kR_k)\in\ell^1$, then $\Delta_k\le E_k=\mathcal O(1/B_k^2)$.
			\end{itemize}
			In any case, $\Phi(x_k) - \Phi(x^*) + \left\langle Ax_k,y^*\right\rangle=\mathcal O(\Delta_k)$ and $\Gamma(y_k) - \Gamma(y^*) - \left\langle Ax^*,y_k\right\rangle=\mathcal O(\Delta_k)$, and every weak subsequential limit point of $(x_k,y_k)$ is a primal-dual solution of \eqref{eq:lag_phi_gamma}.
		\end{theorem}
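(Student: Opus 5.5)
The plan is to prove a one-step Lyapunov inequality of the form
\[
\Big(1+\frac{1}{\beta_k}\Big)E_{k+1}\le E_k+\text{(error terms)},
\]
and then unroll it with the weights $B_k^2$. Here $R_k$ is the weighted size of the errors $(\varepsilon_{k+1},\epsilon_{k+1})$, as defined in the paper (or via Lemma~\ref{L:composite_inexact}). I will treat it as whatever weighted error norm makes the Young-inequality step below close.

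\textbf{Step 1: derive the one-step inequality.} Write the inclusions \eqref{eq:alg1_composite_inexact} as $u_{k+1}\in\partial f(x_{k+1})$ and $v_{k+1}\in\partial g(y_{k+1})$. I will use the subgradient inequality for $f$ at $x_{k+1}$ twice, once against $x^*$ and once against $x_k$. I will use the descent lemma for $F$, with constant $L_F$, between $x_k$ and $x_{k+1}$, together with the convexity of $F$ against $x^*$, and do the same for $g,G$.

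Then I take the combination ``$(\beta_k+1)\times$(test with $x_{k+1}-x^*$) $-\beta_k\times$(test with $x_{k+1}-x_k$)''. This is the natural combination for producing $(\beta_k+1)\Delta_{k+1}-\beta_k\Delta_k$, which after dividing by $\beta_k$ gives the factor $(1+1/\beta_k)$. The coupling terms $\inner{A^*(\cdot)}{\cdot}$ coming from $\beta_kA^*(y_{k+1}-y_k)$ and $-\beta_kA(x_{k+1}-x_k)$ should cancel because of the antisymmetry. The terms $A^*y_{k+1}$ and $-Ax_{k+1}$ combine with $(x^*,y^*)$ to form the gap.

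The terms $\alpha_k\inner{x_{k+1}-x_k}{x_{k+1}-x^*}$ are handled by the three-point identity. This produces $\frac{\alpha_k}{2}(\|x_{k+1}-x^*\|^2-\|x_k-x^*\|^2+\|x_{k+1}-x_k\|^2)$. After dividing by $\beta_k$, these become $X_{k+1}$-type and $X_k$ terms. Using $(1+1/\beta_k)\tau_{k+1}\le\tau_k$ from Remark~\ref{re:alg1}, the left side dominates $(1+1/\beta_k)X_{k+1}$.

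The leftover $\|x_{k+1}-x_k\|^2$ terms have total coefficient $\alpha_k(2\beta_k+1)-(\beta_k+1)L_F$, up to normalization. By Remark~\ref{re:alg1} this is at least $\theta\alpha_k(2\beta_k+1)$, so a negative quadratic term of size $\theta$ survives. The same holds for $y$. The result should be
\[
\Big(1+\tfrac1{\beta_k}\Big)E_{k+1}\le E_k-\theta c_k\|x_{k+1}-x_k\|^2-\theta d_k\|y_{k+1}-y_k\|^2+\tfrac{1}{\beta_k}\,\inner{\varepsilon_{k+1}}{(\beta_k+1)(x_{k+1}-x^*)-\beta_k(x_{k+1}-x_k)}+(\text{same for }\epsilon),
\]
where $c_k,d_k$ are the normalized coefficients of the surviving quadratic terms.

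\textbf{Step 2: absorb the errors.} The error pairing with $x_{k+1}-x^*$ is bounded by Cauchy--Schwarz by a constant times $\beta_kR_k\sqrt{E_{k+1}}$, since $\|x_{k+1}-x^*\|\le\sqrt{2X_{k+1}/\tau_{k+1}}$. The pairing with $x_{k+1}-x_k$ is handled by Young's inequality against the $\theta$-quadratic term. This leaves a term of order $\frac{1}{2\theta}\beta_k^2R_k^2$.

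Multiplying by $B_k^2$ and using $B_k^2=(1+1/\beta_k)B_{k-1}^2$ gives
\[
B_k^2E_{k+1}\le B_{k-1}^2E_k+2\beta_kB_k^2R_k\sqrt{E_{k+1}}+\tfrac{1}{2\theta}\beta_k^2B_k^2R_k^2 .
\]

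\textbf{Step 3: conclude.} I sum this and apply a discrete Bihari/LaSalle-type lemma for $a_{k+1}^2\le a_0^2+\sum_j(2\lambda_ja_{j+1}+\mu\lambda_j^2)$, with $a_{k}=B_{k-1}\sqrt{E_k}$ and $\lambda_j=\beta_jB_jR_j$. Solving the resulting quadratic gives $a_{k+1}\le a_0+(1+\sqrt{1+\mu})\sum_j\lambda_j$ with $\mu=1/(2\theta)$, which is exactly \eqref{E:rate_inexact}.

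For ii), the summability of $\beta_jB_jR_j$ gives $E_{k+1}=\mathcal O(1/B_k^2)$. For i), the condition $1/\beta_k\notin\ell^1$ gives $B_k\to\infty$. A Kronecker-type argument then shows that $\frac1{B_k}\sum_j\beta_jB_jR_j\to0$ when $\beta_jR_j\in\ell^1$: split the sum at a large index $N$ and use that $B_j/B_k\le1$. The final claims follow from \eqref{eq:decay_Phi_Gamma} and lower semicontinuity.

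\textbf{Main obstacle.} The delicate step is the bookkeeping in Step 1. The weighted combination has to make the coupling terms cancel exactly and turn the $A^*y_{k+1}$ and $-Ax_{k+1}$ terms into $(\beta_k+1)\Delta_{k+1}-\beta_k\Delta_k$. The first thing I would check is the $x$-coupling terms under the shifted form of Remark~\ref{r:iterations_shift}, with $\tilde\beta_k=\beta_k+1$, since that form seems designed for this cancellation.
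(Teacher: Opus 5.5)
Your architecture is the paper's: a one-step inequality with factor $1+1/\beta_k$ (Lemma~\ref{L:composite_inexact}), Cauchy--Schwarz and Young against the surviving $\theta$-terms (Proposition~\ref{prop:E_decay_composite_inexact}), a quadratic solve, and iteration with the weights $B_k$. The paper solves the one-step quadratic directly, getting $\sqrt{E_{k+1}}\le \frac{B_{k-1}}{B_k}\sqrt{E_k}+\bigl(1+\sqrt{1+\frac{1}{2\theta}}\bigr)\beta_kR_k$, and then iterates. Summing first and applying a Bihari-type lemma gives the same constant, and your splitting argument for i) is a fine substitute for citing Kronecker's lemma.

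The problem is Step 1, which is exactly the step you flagged. As written, ``$(\beta_k+1)\times$(test with $x_{k+1}-x^*$) $-\beta_k\times$(test with $x_{k+1}-x_k$)'' puts a negative weight on an inequality. The vector it pairs with, $(\beta_k+1)(x_{k+1}-x^*)-\beta_k(x_{k+1}-x_k)$, is also the wrong one. To produce $(\beta_k+1)\Delta_{k+1}-\beta_k\Delta_k$, take $\beta_k$ times the bounds on $\Phi(x_{k+1})-\Phi(x_k)$ and $\Gamma(y_{k+1})-\Gamma(y_k)$, plus $1$ times the bounds on $\Phi(x_{k+1})-\Phi(x^*)$ and $\Gamma(y_{k+1})-\Gamma(y^*)$. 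That is, test with
\[
w_x=\beta_k(x_{k+1}-x_k)+(x_{k+1}-x^*)=(\beta_k+1)(x_{k+1}-x^*)-\beta_k(x_k-x^*),\qquad w_y=\beta_k(y_{k+1}-y_k)+(y_{k+1}-y^*).
\]
You wrote $x_{k+1}-x_k$ where $x_k-x^*$ belongs. Your vector differs from $w_x$ by $\beta_k(x_{k+1}-2x_k+x^*)$. With it the coupling does not cancel, and the $\alpha_k$-term yields $+\frac{\alpha_k(\beta_k-1)}{2}\|x_{k+1}-x_k\|^2$ instead of $-\frac{\alpha_k(2\beta_k+1)}{2}\|x_{k+1}-x_k\|^2$. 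Your later coefficient $\alpha_k(2\beta_k+1)-(\beta_k+1)L_F$ is the one $w_x$ produces, so this is a slip, but it propagates into your error term.

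With $w_x,w_y$ the bookkeeping you worried about is immediate, already in the unshifted form. The bilinear part of $(\beta_k+1)\Delta_{k+1}-\beta_k\Delta_k$ equals $\langle A^*y^*,w_x\rangle-\langle Ax^*,w_y\rangle$. Moreover $\beta_kA^*(y_{k+1}-y_k)+A^*y_{k+1}-A^*y^*=A^*w_y$ and $\beta_kA(x_{k+1}-x_k)+Ax_{k+1}-Ax^*=Aw_x$. So all coupling collapses to $-\langle A^*w_y,w_x\rangle+\langle Aw_x,w_y\rangle=0$. This is where the off-diagonal preconditioner weight must coincide with the Lyapunov weight $\beta_k$.

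The error term is then $\langle\varepsilon_{k+1},x_{k+1}-x_k\rangle+\frac{1}{\beta_k}\langle\varepsilon_{k+1},x_{k+1}-x^*\rangle$, not $\frac{\beta_k+1}{\beta_k}\langle\varepsilon_{k+1},x_{k+1}-x^*\rangle-\dots$. Take the paper's $R_k=\frac{\sqrt{\beta_{k+1}}}{\beta_k}\max\{\|\varepsilon_{k+1}\|/\sqrt{\alpha_{k+1}},\|\epsilon_{k+1}\|/\sqrt{\delta_{k+1}}\}$. The $x^*$-pairings are then at most $2R_k\sqrt{E_{k+1}}$, with no factor $\beta_k$. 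The Young terms are at most $\frac{\beta_k}{2\theta}R_k^2$, provided you use the last two inequalities of Assumption~\ref{a:a_composite_inexact}, namely $\frac{1}{\alpha_k}\le\frac{\beta_{k+1}}{(1+\beta_k)\alpha_{k+1}}$ and its analogue for $\delta$. These let you pass from the index-$k$ weights in Young to the index-$(k+1)$ weights in $R_k$. You cannot leave $R_k$ as ``whatever makes Young close'', because the theorem is stated for this specific $R_k$.

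Multiplying by $B_{k-1}^2=\frac{\beta_k}{1+\beta_k}B_k^2$ then gives a slightly sharper version of your Step 2 display. Step 3 goes through unchanged.
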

		
		The proof is postponed briefly, to Section \ref{s:proof}.

		\subsection{The role of the parameters}
		
		Let us now discuss some concrete instances of Theorem \ref{T:implicit_inexact}. In the following examples, we assume $\alpha_k=\delta_k$ and $\|\varepsilon_k\|=\|\epsilon_k\|=:e_k$ in order to simplify the exposition. 
		
		\begin{example}
			Suppose first that $\beta_k\equiv \beta_0$. In the {\it error-free} case, where $e_k\equiv 0$, we have
			\begin{equation} \label{E:linear_rate_betabounded}
				\Delta_k\le E_0\left(\frac{\beta_0}{1+\beta_0}\right)^k.
			\end{equation}
			In other words, $\Delta_k$ converges linearly to zero. If the errors are such that
			$$\sum_{k=1}^\infty \left(\frac{1+\beta_0}{\beta_0}\right)^ke_k<\infty,$$
			the linear convergence of $\Delta_k$ to 0 given by \eqref{E:linear_rate_betabounded} is preserved. On the other hand, if
			$$\beta_kR_k=\sqrt{\beta_0}\frac{e_k}{\sqrt{\alpha_k}}\ge \sqrt{\frac{\beta_0}{\alpha_0}}\left(\frac{1+\beta_0}{\beta_0}\right)^{\frac{k}{2}}e_k$$
			is summable, the minimization property is preserved, albeit without rates. 
		\end{example}
		
		\begin{example}\label{ex:parameters_errors}
			For $\alpha\equiv\alpha_0$, equality in Assumption \ref{a:a_composite_inexact} gives $\beta_k=\beta_0+k$, and so
			$$\beta_kR_k=\frac{\sqrt{\beta_0+k+1}e_{k+1}}{\sqrt{\alpha_0}}.$$
			It follows that $\Delta_k\to 0$ under the much milder assumption that $\ds\sum_{k=1}^\infty \sqrt{k}e_k<\infty$. In the error-free case, or if $\ds\sum_{k=1}^\infty ke_k<\infty$, we get $\Delta_k=\mathcal O\left(\frac{1}{B_k^2}\right)=\mathcal O\left(\frac{1}{k}\right)$. 
		\end{example}
		
		\begin{example}
			For $\alpha_k=\alpha_0+k$ (and also with the equality in Assumption \ref{a:a_composite_inexact}) we have 
			$$\beta_{k+1}=\left(\frac{\alpha_0+k+1}{\alpha_0+k}\right)(\beta_k+1).$$
			This leads to
			$$\beta_{k+1}=(\alpha_0+k+1)\left[\frac{\beta_0}{\alpha_0}+\sum_{j=0}^k\frac{1}{\alpha_0+j}\right]\sim (\alpha_0+k+1)\ln(\alpha_0+k+1)\qbox{and}\frac{\beta_{k+1}}{\alpha_{k+1}}\sim\ln(\alpha_0+k+1).$$
			As a consequence, $\Delta_k\to 0$ if $\ds\sum_{j=1}^\infty \sqrt{\ln(j)}e_j<\infty$. In the error-free case, or if $\ds\sum_{j=1}^\infty \ln(j)e_j<\infty$, we have $\Delta_k=\mathcal O\left(\frac{1}{\sqrt{\ln(k)}}\right)$.
		\end{example}

		\section{Proof of Theorem \ref{T:implicit_inexact}} \label{s:proof}
		
		Fix any $(x^*,y^*) \in S$. Recall that
		\[E_{k}=\Delta_{k}+X_{k}(x^*)+Y_{k}(y^*),\]
		where
		\begin{align*}
			\Delta_k & = \lag\paren{x_{k},y^*} -  \lag\paren{x^*,y_k} \\
			& = \big[\Phi(x_{k})+\inner{Ax_k}{y^*}-\Gamma(y^*)\big]-\big[\Phi(x^*)+\inner{Ax^*}{y_k}-\Gamma(y_k)\big] \\
			& = \big[\Phi(x_k)-\Phi(x^*)\big]+\big[\Gamma(y_k)-\Gamma(y^*)\big]+\big[\inner{Ax_k}{y^*}-\inner{Ax^*}{y_k}\big],
		\end{align*}
		and
		\[X_{k}(x^*)=\frac{\tau_{k}}{2}\norm{x_k-x^*}^2 \quad\text{and}\quad Y_{k}(y^*)=\frac{\sigma_k}{2}\norm{y_k-y^*}^2,\]
		with $\tau_{k}=\frac{\alpha_{k}}{\beta_{k}}$ and $\sigma_{k}=\frac{\delta_{k}}{\beta_{k}}$.
		
		We have the following energy decrease estimation, from which the motivation for Assumption \ref{a:a_composite_inexact} becomes evident:
		
		\begin{lemma} \label{L:composite_inexact}
			Let $(x^*,y^*) \in S$, and let $(x_k,y_k)$ be generated by Algorithm \eqref{eq:alg1_composite_inexact}. Then,
			\begin{align*}
				\left(1+\frac{1}{\beta_k}\right)&E_{k+1}-E_k \\
				& \le  \frac{1}{2}\left[\left(1+\frac{1}{\beta_k}\right)\tau_{k+1}-\tau_k\right]\|x_{k+1}-x^*\|^2 + \frac{1}{2}\left[\left(1+\frac{1}{\beta_k}\right)\sigma_{k+1}-\sigma_k\right]|y_{k+1}-y^*\|^2 \\
				&\quad + \left[\frac{(\beta_k+1)L_{F}-(2\beta_k+1)\alpha_k}{2\beta_k}\right]\|x_{k+1}-x_k\|^2 \\
				&\quad + \left[\frac{(\beta_k+1)L_{G}-(2\beta_k+1)\delta_k}{2\beta_k}\right]|y_{k+1}-y_k\|^2\\
				&\quad + \inner{\varepsilon_{k+1}}{x_{k+1}-x_k} + \frac{\|\varepsilon_{k+1}\|}{\beta_k}\|x_{k+1}-x^*\| +  \inner{\epsilon_{k+1}}{y_{k+1}-y_k} +  \frac{\|\epsilon_{k+1}\|}{\beta_k}\|y_{k+1}-y^*\|.
			\end{align*}
		\end{lemma}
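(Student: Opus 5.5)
Multiply the energy inequality by $\beta_k$ and bound $\beta_k(E_{k+1}-E_k)+E_{k+1}$. Write $\beta_k\big[(1+\frac1{\beta_k})E_{k+1}-E_k\big] = \beta_k(\Delta_{k+1}-\Delta_k)+\Delta_{k+1} + [\text{distance terms}]$, and treat the gap part and the distance part separately.

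\textbf{Distance part.} For the quadratic terms, use the exact identities
\[
\beta_k\Big[\Big(1+\tfrac{1}{\beta_k}\Big)X_{k+1}-X_k\Big] = \tfrac{\beta_k}{2}\Big[\Big(1+\tfrac1{\beta_k}\Big)\tau_{k+1}-\tau_k\Big]\|x_{k+1}-x^*\|^2 + \tfrac{\beta_k\tau_k}{2}\big(\|x_{k+1}-x^*\|^2-\|x_k-x^*\|^2\big),
\]
together with its dual analogue. Then apply the three-point identity
\[
\|x_{k+1}-x^*\|^2-\|x_k-x^*\|^2 = 2\inner{x_{k+1}-x_k}{x_{k+1}-x^*}-\|x_{k+1}-x_k\|^2 .
\]
Since $\beta_k\tau_k=\alpha_k$, this produces $\alpha_k\inner{x_{k+1}-x_k}{x_{k+1}-x^*}-\frac{\alpha_k}{2}\|x_{k+1}-x_k\|^2$, and similarly with $\delta_k$ on the dual side.

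\textbf{Gap part.} From the inclusions \eqref{eq:alg1_composite_inexact}, pick $u\in\partial f(x_{k+1})$ and $v\in\partial g(y_{k+1})$ and solve for them. Apply the subgradient inequality for $f$ at $x_{k+1}$ against two test points: $x_k$ with weight $\beta_k$, and $x^*$ with weight $1$. For $F$, use the descent lemma
\[
F(x_{k+1})\le F(x_k)+\inner{\nabla F(x_k)}{x_{k+1}-x_k}+\tfrac{L_F}{2}\|x_{k+1}-x_k\|^2 ,
\]
weighted by $\beta_k+1$, combined with the convexity inequality
\[
F(x_k)-F(z)\le\inner{\nabla F(x_k)}{x_k-z}.
\]
The same steps on the dual side give
\[
\beta_k(\Phi(x_{k+1})-\Phi(x_k)) + \Phi(x_{k+1})-\Phi(x^*)\le -\inner{\alpha_k(x_{k+1}-x_k)+\beta_kA^*(y_{k+1}-y_k)+A^*y_{k+1}-\varepsilon_{k+1}}{(\beta_k+1)x_{k+1}-\beta_kx_k-x^*} + \tfrac{(\beta_k+1)L_F}{2}\|x_{k+1}-x_k\|^2,
\]
and the analogous bound for $\Gamma$.

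\textbf{Combining.} Add the bilinear terms of $\Delta$. Note that $(\beta_k+1)x_{k+1}-\beta_kx_k-x^*=\beta_k(x_{k+1}-x_k)+(x_{k+1}-x^*)$. The $\alpha_k$ terms then give $-\alpha_k\beta_k\|x_{k+1}-x_k\|^2-\alpha_k\inner{x_{k+1}-x_k}{x_{k+1}-x^*}$. The cross term cancels against the distance part, leaving the coefficient
\[
-\alpha_k\Big(\beta_k+\tfrac12\Big)+\tfrac{(\beta_k+1)L_F}{2}=\tfrac{(\beta_k+1)L_F-(2\beta_k+1)\alpha_k}{2}.
\]
The error terms become $\beta_k\inner{\varepsilon_{k+1}}{x_{k+1}-x_k}+\inner{\varepsilon_{k+1}}{x_{k+1}-x^*}$, and the second is bounded by Cauchy–Schwarz. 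Dividing everything by $\beta_k$ gives exactly the stated form.

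\textbf{Main obstacle.} The delicate step is checking that all coupling terms in $A$ cancel. These come from $\beta_kA^*(y_{k+1}-y_k)$, $-\beta_kA(x_{k+1}-x_k)$, $A^*y_{k+1}$, $-Ax_{k+1}$, and the bilinear part of $\beta_k(\Delta_{k+1}-\Delta_k)+\Delta_{k+1}$. The optimality conditions \eqref{eq:optimality_condition} are not needed, since the $x^*$ and $y^*$ terms appear only through $\Delta$. Write $a=x_{k+1}-x_k$, $p=x_{k+1}-x^*$, $b=y_{k+1}-y_k$, $q=y_{k+1}-y^*$. Expanding, the $\beta_k^2$ terms $\inner{A^*b}{a}$ and $-\inner{Aa}{b}$ cancel by antisymmetry. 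The remaining $\beta_k$-weighted terms pair off with the bilinear part of $\Delta$, using
\[
\inner{Ax_{k+1}}{y^*}-\inner{Ax^*}{y_{k+1}} = \inner{Ap}{y_{k+1}} - \inner{Ax_{k+1}}{q}.
\]
I would verify this cancellation term by term; the antisymmetric choice $-\beta_k$ in \eqref{eq:alg1_composite_inexact} is exactly what makes it work.
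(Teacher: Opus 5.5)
Your proposal is correct and is essentially the paper's proof multiplied through by $\beta_k$. Like you, the paper tests the subgradient inequalities at $(x_{k+1},y_{k+1})$ against $(x_k,y_k)$ and $(x^*,y^*)$ (with weights $1$ and $1/\beta_k$ instead of your $\beta_k$ and $1$), uses the descent lemma plus convexity of $F$ and $G$, gets the same cancellation of all $A$-coupling terms, and finishes with the same three-point identity; merging the two test points into $\beta_k(x_{k+1}-x_k)+(x_{k+1}-x^*)$ is only a cosmetic regrouping. One small precision for when you write out the cancellation: the identity you display handles the weight-one coupling terms, while the $\beta_k$-weighted ones cancel after substituting $y^*=y_{k+1}-(y_{k+1}-y^*)$ in $\inner{A(x_{k+1}-x_k)}{y^*}$ and $Ax^*=Ax_{k+1}-A(x_{k+1}-x^*)$ in $\inner{Ax^*}{y_{k+1}-y_k}$.
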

		
		\begin{proof}
			We begin by writing
			\begin{align*}
				\paren{1+\frac{1}{\beta_k}}&\Delta_{k+1}-\Delta_k \\
				& =  \big(\Phi(x_{k+1})-\Phi(x_k)\big) + \frac{1}{\beta_k}\big(\Phi(x_{k+1})-\Phi(x^*)\big) \\
				&\quad + \big(\Gamma(y_{k+1})-\Gamma(y_k)\big) + \frac{1}{\beta_k}\big(\Gamma(y_{k+1})-\Gamma(y^*)\big)\\
				&\quad + \inner{A(x_{k+1}-x_k)}{y^*}-\inner{Ax^*}{y_{k+1}-y_k}+\frac{1}{\beta_k}\big(\inner{Ax_{k+1}}{y^*}-\inner{Ax^*}{y_{k+1}}\big) \\
				& = \Big[\Phi(x_{k+1})-\Phi(x_k)\Big] + \Big[\Gamma(y_{k+1})-\Gamma(y_k)\Big] + \Big[\inner{A(x_{k+1}-x_k)}{y^*}-\inner{Ax^*}{y_{k+1}-y_k}\Big] \\
				& \quad + \frac{1}{\beta_k}\bigg[\big(\Phi(x_{k+1})-\Phi(x^*)\big) + \big(\Gamma(y_{k+1})-\Gamma(y^*)\big) + \big(\inner{Ax_{k+1}}{y^*}-\inner{Ax^*}{y_{k+1}}\big)\bigg]. 
			\end{align*}
			Set 
			\[p_{k+1} := A^*\paren{-\beta_k\paren{y_{k+1}-y_k}-y_{k+1}} - \alpha_k\paren{x_{k+1}-x_{k}},\]
			so that $p_{k+1} +\varepsilon_{k+1}-\nabla F(x_k)\in \partial f\paren{x_{k+1}}$. By convexity, we have
			$$f(x_{k+1})\le f(x_k)+\inner{p_{k+1} +\varepsilon_{k+1}-\nabla F(x_k)}{x_{k+1}-x_k}.$$
			But we also have
			$$F(x_{k+1}) \le F(x_k) + \inner{\nabla F(x_k)}{x_{k+1}-x_k}+\frac{L_{F}}{2}\|x_{k+1}-x_k\|^2,$$
			from which it follows that
			$$\Phi(x_{k+1})\le \Phi(x_k)+\inner{p_{k+1} +\varepsilon_{k+1}}{x_{k+1}-x_k}+\frac{L_{F}}{2}\|x_{k+1}-x_k\|^2.$$
			On the other hand, we have
			$$f(x_{k+1})\le f(x^*)+\inner{p_{k+1} +\varepsilon_{k+1}-\nabla F(x_k)}{x_{k+1}-x^*}$$
			and
			$$F(x_k)\le F(x^*)+\inner{\nabla F(x_k)}{x_k-x^*}.$$
			Combining these inequalities, we get
			$$\Phi(x_{k+1})\le \Phi(x^*)+\inner{p_{k+1} +\varepsilon_{k+1}}{x_{k+1}-x^*}+\frac{L_{F}}{2}\|x_{k+1}-x_k\|^2.$$
			Similarly, setting 
			\[q_{k+1} := \beta_{k}A(x_{k+1}-x_{k})-\delta_{k}(y_{k+1}-y_{k})+Ax_{k+1},\]
			we have $q_{k+1} +\epsilon_{k+1}-\nabla G(y_{k})\in \partial g^*\paren{y_{k+1}}$, whence
			$$\Gamma(y_{k+1})\le \Gamma(y_k)+\inner{q_{k+1} +\epsilon_{k+1}}{y_{k+1}-y_k}+\frac{L_{G}}{2}\|y_{k+1}-y_k\|^2$$
			and
			$$\Gamma(y_{k+1})\le \Gamma(y^*)+\inner{q_{k+1} +\epsilon_{k+1}}{y_{k+1}-y^*}+\frac{L_{G}}{2}\|y_{k+1}-y_k\|^2.$$
			The discussion above gives
			\begin{align*}
				\paren{1+\frac{1}{\beta_k}}&\Delta_{k+1}-\Delta_k\\
				=& \Big[\Phi(x_{k+1})-\Phi(x_k)\Big] + \Big[\Gamma(y_{k+1})-\Gamma(y_k)\Big] + \Big[\inner{A(x_{k+1}-x_k)}{y^*}-\inner{Ax^*}{y_{k+1}-y_k}\Big] \\
				& + \frac{1}{\beta_k}\bigg[\big(\Phi(x_{k+1})-\Phi(x^*)\big) + \big(\Gamma(y_{k+1})-\Gamma(y^*)\big) + \big(\inner{Ax_{k+1}}{y^*}-\inner{Ax^*}{y_{k+1}}\big)\bigg]  \\
				\le&  \inner{p_{k+1} +\varepsilon_{k+1}}{x_{k+1}-x_k}+\frac{L_{F}}{2}\|x_{k+1}-x_k\|^2  + \inner{q_{k+1} +\epsilon_{k+1}}{y_{k+1}-y_k}+\frac{L_{G}}{2}\|y_{k+1}-y_k\|^2 \\
				&  + \Big[\inner{A(x_{k+1}-x_k)}{y^*}-\inner{Ax^*}{y_{k+1}-y_k}\Big] \\
				&  + \frac{1}{\beta_k}\bigg[\inner{p_{k+1} +\varepsilon_{k+1}}{x_{k+1}-x^*}+\frac{L_{F}}{2}\|x_{k+1}-x_k\|^2\bigg]  \\
				&  + \frac{1}{\beta_k}\bigg[\inner{q_{k+1} +\epsilon_{k+1}}{y_{k+1}-y^*}+\frac{L_{G}}{2}\|y_{k+1}-y_k\|^2\bigg]  \\
				& + \frac{1}{\beta_k}\bigg[\inner{Ax_{k+1}}{y^*}-\inner{Ax^*}{y_{k+1}}\bigg] \\
				=&  \inner{p_{k+1} + A^*y^*}{x_{k+1}-x_k}  + \inner{q_{k+1}-Ax^*}{y_{k+1}-y_k}  \\
				& \quad  + \frac{1}{\beta_k}\bigg[ \inner{p_{k+1} }{x_{k+1}-x^*}  + \inner{q_{k+1}}{y_{k+1}-y^*} + \inner{Ax_{k+1}}{y^*}-\inner{Ax^*}{y_{k+1}}\bigg] \\
				& \quad + \frac{(\beta_k+1)L_{F}}{2\beta_k}\|x_{k+1}-x_k\|^2 + \frac{(\beta_k+1)L_{G}}{2\beta_k}\|y_{k+1}-y_k\|^2\\
				& \quad + \inner{\varepsilon_{k+1}}{x_{k+1}-x_k+\frac{1}{\beta_k}(x_{k+1}-x^*)} +  \inner{\epsilon_{k+1}}{y_{k+1}-y_k+\frac{1}{\beta_k}(y_{k+1}-y^*)}  \\
				=& \inner{p_{k+1} + A^*y^*}{x_{k+1}-x_k}  + \inner{q_{k+1}-Ax^*}{y_{k+1}-y_k}  \\
				& \quad  + \frac{1}{\beta_k}\Big[ \inner{p_{k+1} +A^*y_{k+1}}{x_{k+1}-x^*}  + \inner{q_{k+1}-Ax_{k+1}}{y_{k+1}-y^*}\Big] \\
				& \quad + \frac{(\beta_k+1)L_{F}}{2\beta_k}\|x_{k+1}-x_k\|^2 + \frac{(\beta_k+1)L_{G}}{2\beta_k}\|y_{k+1}-y_k\|^2\\
				& \quad + \inner{\varepsilon_{k+1}}{x_{k+1}-x_k+\frac{1}{\beta_k}(x_{k+1}-x^*)} +  \inner{\epsilon_{k+1}}{y_{k+1}-y_k+\frac{1}{\beta_k}(y_{k+1}-y^*)}.
			\end{align*}
			We focus our attention on the first line on the right-hand side. From the definitions 
			\begin{align*}
				p_{k+1} & = A^*\paren{-\beta_k\paren{y_{k+1}-y_k}-y_{k+1}} - \alpha_k\paren{x_{k+1}-x_{k}} \\
				q_{k+1} & = \beta_{k}A(x_{k+1}-x_{k})-\delta_{k}(y_{k+1}-y_{k})+Ax_{k+1},
			\end{align*}
			it follows that
			\begin{align*}
				&\inner{p_{k+1} + A^*y^*}{x_{k+1}-x_k}  + \inner{q_{k+1}-Ax^*}{y_{k+1}-y_k} \\
				=&  -\inner{\beta_k\paren{y_{k+1}-y_k}+y_{k+1}-y^*}{A(x_{k+1}-x_k)} +\inner{\beta_{k}(x_{k+1}-x_{k})+x_{k+1}-x^*}{A^*(y_{k+1}-y_k)}
				\\
				& -\alpha_k\|x_{k+1}-x_k\|^2 -\delta_k\|y_{k+1}-y_k\|^2 \\
				=&  -\inner{y_{k+1}-y^*}{A(x_{k+1}-x_k)}  +\inner{x_{k+1}-x^*}{A^*(y_{k+1}-y_k)}
				\\
				& -\alpha_k\|x_{k+1}-x_k\|^2 -\delta_k\|y_{k+1}-y_k\|^2.
			\end{align*}
			Similarly, for the second line, we do
			\begin{align*}
				&\inner{p_{k+1} + A^*y_{k+1}}{x_{k+1}-x^*}  + \inner{q_{k+1}-Ax_{k+1}}{y_{k+1}-y^*} \\
				=& -\beta_k\inner{y_{k+1}-y_k}{A(x_{k+1}-x^*)} -\alpha_k\inner{x_{k+1}-x_k}{x_{k+1}-x^*}\\
				&+\beta_{k}\inner{x_{k+1}-x_{k}}{A^*(y_{k+1}-y^*)} -\delta_k\inner{y_{k+1}-y_k}{y_{k+1}-y^*}.
			\end{align*}
			Multiplying the second equality by $1/\beta_k$, and adding it to the first one, we get
			\begin{align*}
				\left(1+\frac{1}{\beta_k}\right)&\Delta_{k+1}-\Delta_k\\
				&  \le -\frac{\alpha_k}{\beta_k}\inner{x_{k+1}-x_k}{x_{k+1}-x^*} -\frac{\delta_k}{\beta_k}\inner{y_{k+1}-y_k}{y_{k+1}-y^*} \\
				& \quad + \left[\frac{(\beta_k+1)L_{F}}{2\beta_k}-\alpha_k\right]\|x_{k+1}-x_k\|^2 + \left[\frac{(\beta_k+1)L_{G}}{2\beta_k}-\delta_k\right]|y_{k+1}-y_k\|^2\\
				& \quad + \inner{\varepsilon_{k+1}}{x_{k+1}-x_k+\frac{1}{\beta_k}(x_{k+1}-x^*)} +  \inner{\epsilon_{k+1}}{y_{k+1}-y_k+\frac{1}{\beta_k}(y_{k+1}-y^*)}.
			\end{align*}
			We now transform the first line on the right-hand side into a sum of squares, obtaining
			\begin{align*}
				\left(1+\frac{1}{\beta_k}\right)&\Delta_{k+1}-\Delta_k\\
				& \le \frac{\alpha_k}{2\beta_k}\Big[\|x_k-x^*\|^2-\|x_{k+1}-x^*\|^2-\|x_{k+1}-x_k\|^2\Big] \\
				& \quad +\frac{\delta_k}{2\beta_k}\Big[\|y_k-x^*\|^2-\|y_{k+1}-x^*\|^2-\|y_{k+1}-y_k\|^2\Big] \\
				& \quad + \left[\frac{(\beta_k+1)L_{F}}{2\beta_k}-\alpha_k\right]\|x_{k+1}-x_k\|^2 + \left[\frac{(\beta_k+1)L_{G}}{2\beta_k}-\delta_k\right]|y_{k+1}-y_k\|^2\\
				& \quad + \inner{\varepsilon_{k+1}}{x_{k+1}-x_k+\frac{1}{\beta_k}(x_{k+1}-x^*)} +  \inner{\epsilon_{k+1}}{y_{k+1}-y_k+\frac{1}{\beta_k}(y_{k+1}-y^*)} \\
				& = \frac{\tau_k}{2}\Big[\|x_k-x^*\|^2-\|x_{k+1}-x^*\|^2\Big] + \frac{\sigma_k}{2}\Big[\|y_k-x^*\|^2-\|y_{k+1}-x^*\|^2\Big] \\
				& \quad + \left[\frac{(\beta_k+1)L_{F}-\alpha_k}{2\beta_k}-\alpha_k\right]\|x_{k+1}-x_k\|^2 + \left[\frac{(\beta_k+1)L_{G}-\delta_k}{2\beta_k}-\delta_k\right]|y_{k+1}-y_k\|^2\\
				& \quad + \inner{\varepsilon_{k+1}}{x_{k+1}-x_k+\frac{1}{\beta_k}(x_{k+1}-x^*)} +  \inner{\epsilon_{k+1}}{y_{k+1}-y_k+\frac{1}{\beta_k}(y_{k+1}-y^*)}
			\end{align*}
			Finally, we have
			\begin{align*}
				\left(1+\frac{1}{\beta_k}\right)X_{k+1}(x^*)-X_k(x^*) 
				& = \left(1+\frac{1}{\beta_k}\right)\frac{\tau_{k+1}}{2}\norm{x_{k+1}-x^*}^2-\frac{\tau_{k}}{2}\norm{x_k-x^*}^2 \\
				\left(1+\frac{1}{\beta_k}\right)Y_{k+1}(y^*)-Y_k(y^*) 
				& = \left(1+\frac{1}{\beta_k}\right)\frac{\sigma_{k+1}}{2}\norm{y_{k+1}-y^*}^2-\frac{\sigma_{k}}{2}\norm{y_k-y^*}^2,
			\end{align*}
			which, combined with the inequality above, give the desired result.
		\end{proof}
		
		As a consequence, we obtain:
		
		\begin{proposition}\label{prop:E_decay_composite_inexact}
			Let $(x^*,y^*) \in S$, and let $(x_k,y_k)$ be generated by Algorithm \eqref{eq:alg1_composite_inexact} with the parameters satisfying Assumption \ref{a:a_composite_inexact}. Then, 
			\begin{equation} \label{E:Quadratic_inequality_composite_inexact}
				\left(1+\frac{1}{\beta_k}\right)E_{k+1}-E_k \le 2R_k\sqrt{E_{k+1}}+\frac{\beta_k}{2\theta}R_k^2,
			\end{equation}
			where we have written
			\begin{equation} \label{E:R_k_composite_inexact}
				R_k:=\frac{\sqrt{\beta_{k+1}}}{\beta_k}\max\left\{\frac{\|\varepsilon_{k+1}\|}{\sqrt{\alpha_{k+1}}}, \frac{\|\epsilon_{k+1}\|}{\sqrt{\delta_{k+1}}}\right\}.
			\end{equation}
		\end{proposition}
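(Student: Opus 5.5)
The plan is to start from Lemma~\ref{L:composite_inexact} and use Assumption~\ref{a:a_composite_inexact} (through Remark~\ref{re:alg1}) to eliminate or sign every term on the right-hand side except the error terms. The first two brackets, multiplying $\|x_{k+1}-x^*\|^2$ and $\|y_{k+1}-y^*\|^2$, are nonpositive because $(1+1/\beta_k)\tau_{k+1}\le\tau_k$ and $(1+1/\beta_k)\sigma_{k+1}\le\sigma_k$, so they can be dropped. For the coefficient of $\|x_{k+1}-x_k\|^2$, Remark~\ref{re:alg1} gives
\[
\frac{(\beta_k+1)L_F-(2\beta_k+1)\alpha_k}{2\beta_k}\le -\theta\,\frac{(2\beta_k+1)\alpha_k}{2\beta_k}\le -\theta\alpha_k .
\]
Similarly, the coefficient of $\|y_{k+1}-y_k\|^2$ is at most $-\theta\delta_k$. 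We keep these negative quadratic terms to absorb the error terms involving $x_{k+1}-x_k$ and $y_{k+1}-y_k$.

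Next we treat the cross terms. By Cauchy--Schwarz and Young,
\[
\inner{\varepsilon_{k+1}}{x_{k+1}-x_k}-\theta\alpha_k\|x_{k+1}-x_k\|^2\le \frac{\|\varepsilon_{k+1}\|^2}{4\theta\alpha_k},
\]
and the analogous bound holds for $\epsilon_{k+1}$ with $\delta_k$. We then express these in terms of $R_k$. Since $\beta_{k+1}/\alpha_{k+1}\ge(1+\beta_k)/\alpha_k\ge\beta_k/\alpha_k$, we get $1/\alpha_k\le \beta_{k+1}/(\beta_k\alpha_{k+1})$. Hence
\[
\frac{\|\varepsilon_{k+1}\|^2}{\alpha_k}\le \frac{\beta_{k+1}\|\varepsilon_{k+1}\|^2}{\beta_k\alpha_{k+1}}\le \beta_kR_k^2 .
\]
Adding the $x$ and $y$ contributions gives $\frac{\beta_k}{4\theta}R_k^2\cdot 2=\frac{\beta_k}{2\theta}R_k^2$, which matches the second term of \eqref{E:Quadratic_inequality_composite_inexact}.

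For the remaining terms $\frac{\|\varepsilon_{k+1}\|}{\beta_k}\|x_{k+1}-x^*\|$, we use $X_{k+1}(x^*)\le E_{k+1}$, so that
\[
\|x_{k+1}-x^*\|\le\sqrt{2E_{k+1}/\tau_{k+1}}=\sqrt{2\beta_{k+1}E_{k+1}/\alpha_{k+1}}.
\]
This yields
\[
\frac{\|\varepsilon_{k+1}\|}{\beta_k}\|x_{k+1}-x^*\|\le\sqrt2\,\frac{\sqrt{\beta_{k+1}}}{\beta_k}\frac{\|\varepsilon_{k+1}\|}{\sqrt{\alpha_{k+1}}}\sqrt{E_{k+1}}\le\sqrt2 R_k\sqrt{E_{k+1}},
\]
and the same holds for $y$ with $\sigma_{k+1}=\delta_{k+1}/\beta_{k+1}$.

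The main obstacle is the constant in front of $R_k\sqrt{E_{k+1}}$. Summing the two bounds separately gives $2\sqrt2$, not $2$. To get $2$, we combine the two terms before bounding: write $a=\sqrt{\tau_{k+1}}\|x_{k+1}-x^*\|$ and $b=\sqrt{\sigma_{k+1}}\|y_{k+1}-y^*\|$, so that the sum is at most $R_k(a+b)$. Then $a+b\le\sqrt{2(a^2+b^2)}=\sqrt{4(X_{k+1}+Y_{k+1})}\le2\sqrt{E_{k+1}}$. Collecting the three estimates yields \eqref{E:Quadratic_inequality_composite_inexact}.
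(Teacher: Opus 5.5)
Your proof is correct and follows the paper's argument. You drop the nonpositive $\|x_{k+1}-x^*\|^2$ and $\|y_{k+1}-y^*\|^2$ terms, absorb $\inner{\varepsilon_{k+1}}{x_{k+1}-x_k}$ and $\inner{\epsilon_{k+1}}{y_{k+1}-y_k}$ into the negative quadratic terms by Young's inequality, and bound the distance terms by $\sqrt{2}R_k\bigl(\sqrt{X_{k+1}(x^*)}+\sqrt{Y_{k+1}(y^*)}\bigr)\le 2R_k\sqrt{E_{k+1}}$, exactly as the paper does. The only difference is cosmetic: you weaken the quadratic coefficient to $-\theta\alpha_k$ and use $1+\beta_k\ge\beta_k$, whereas the paper keeps the factor $\frac{2\beta_k+1}{2\beta_k}$ and then uses $\beta_k^2\le\frac12(\beta_k+1)(2\beta_k+1)$; both routes give exactly $\frac{\beta_k}{2\theta}R_k^2$.
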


		\begin{proof} 
			Under Assumption \ref{a:a_composite_inexact}, Lemma \ref{L:composite_inexact} gives
			\begin{align*}
				&\left(1+\frac{1}{\beta_k}\right)E_{k+1}-E_k \\
				\le&   
				- \frac{\theta\alpha_k(2\beta_k+1)}{2\beta_k}\|x_{k+1}-x_k\|^2 
				- \frac{\theta\delta_k(2\beta_k+1)}{2\beta_k}\|y_{k+1}-y_k\|^2\\
				& + \inner{\varepsilon_{k+1}}{x_{k+1}-x_k} + \frac{\|\varepsilon_{k+1}\|}{\beta_k}\|x_{k+1}-x^*\| +  \inner{\epsilon_{k+1}}{y_{k+1}-y_k} +  \frac{\|\epsilon_{k+1}\|}{\beta_k}\|y_{k+1}-y^*\|.
			\end{align*}
			We use Young's inequality on
			$\inner{\varepsilon_{k+1}}{x_{k+1}-x_k}$ and $ \inner{\epsilon_{k+1}}{y_{k+1}-y_k}$, to get
			\begin{align*}
				\left(1+\frac{1}{\beta_k}\right)E_{k+1}-E_k & \le  \frac{\|\varepsilon_{k+1}\|}{\beta_k}\|x_{k+1}-x^*\| +  \frac{\|\epsilon_{k+1}\|}{\beta_k}\|y_{k+1}-y^*\| \\
				& \quad  + \frac{\beta_k\|\varepsilon_{k+1}\|^2}{2\theta\alpha_k(2\beta_k+1)} + \frac{\beta_k\|\epsilon_{k+1}\|^2}{2\theta\delta_k(2\beta_k+1)} \\
				& \le \sqrt{2}R_k\Big[\sqrt{X_{k+1}(x^*)}+\sqrt{Y_{k+1}(y^*)}\Big] \\
				& \quad 
				+ \frac{\beta_k^3R_k^2\alpha_{k+1}}{2\theta\alpha_k\beta_{k+1}(2\beta_k+1)} + \frac{\beta_k^3R_k^2\delta_{k+1}}{2\theta\delta_k\beta_{k+1}(2\beta_k+1)} \\
				& \le 2R_k\sqrt{E_{k+1}}
				+ \frac{\beta_k^3R_k^2}{2\theta(\beta_k+1)(2\beta_k+1)} + \frac{\beta_k^3R_k^2}{2\theta(\beta_k+1)(2\beta_k+1)},
			\end{align*}
			where we have used Assumption \ref{a:a_composite_inexact} again, and the definition of $R_k$, given in \eqref{E:R_k_composite_inexact}. This clearly gives \eqref{E:Quadratic_inequality_composite_inexact}.
		\end{proof}
		
		We are now in a position to complete the proof of Theorem \ref{T:implicit_inexact}:
		
		\begin{proof}
			Inequality \eqref{E:Quadratic_inequality_composite_inexact}, which we rewrite as
			$$\left(1+\frac{1}{\beta_k}\right)E_{k+1}-2R_k\sqrt{E_{k+1}}-\left[E_k + \frac{\beta_k}{2\theta}R_k^2\right] \le 0,$$
			is quadratic with respect to $\sqrt{E_{k+1}}\ge 0$, and implies that 
			$$\sqrt{E_{k+1}}
			\le \frac{R_k+\sqrt{R_k^2+\left(1+\frac{1}{\beta_k}\right)\left(E_k+\frac{\beta_k}{2\theta}R_k^2\right)}}{\left(1+\frac{1}{\beta_k}\right)} 
			= \frac{R_k+\sqrt{\left(1+\frac{\beta_k+1}{2\theta}\right)R_k^2+\left(1+\frac{1}{\beta_k}\right)E_k}}{\left(1+\frac{1}{\beta_k}\right)}.$$
			Since $\sqrt{a+b}\le \sqrt{a}+\sqrt{b}$ for $a,b\ge 0$, this gives
			\begin{align*}
				\sqrt{E_{k+1}}
				& \le \sqrt{\frac{\beta_k}{1+\beta_k}}\sqrt{E_k}+\frac{\beta_k}{1+\beta_k}\left(1+\sqrt{1+\frac{\beta_k+1}{2\theta}}\right)R_k \\
				& \le \sqrt{\frac{\beta_k}{1+\beta_k}}\sqrt{E_k} + \left(1+\sqrt{1+\frac{1}{2\theta}}\right)\beta_kR_k,   
			\end{align*}
			where we have used that
			$$\frac{1}{1+\beta_k}\left(1+\sqrt{1+\frac{\beta_k+1}{2\theta}}\right)\le 1+\sqrt{1+\frac{1}{2\theta}},$$
			which is true for every $\beta_k\ge 0$. We then iterate this inequality to obtain \eqref{E:rate_inexact}. For i), the sequence $(B_j)$ is increasing and, if $(1/\beta_k)\notin\ell^1$, then $\lim_{k\to\infty}B_k=\infty$. By Kronecker's Lemma (see, for example, \cite[p. 129]{knopp1951theory}), 
			$$\lim_{k\to\infty}\frac{1}{B_k}\sum_{j=0}^kB_j\beta_jR_j=0,$$
			whenever $(\beta_kR_k)\in\ell^1$. It follows that $\lim_{k\to\infty}E_k=0$, and also  $\lim_{k\to\infty}\Delta_k=0$ because $0\le\Delta_k\le E_k$. Part ii) is straightforward from \eqref{E:rate_inexact}.    
		\end{proof}

		\section{A relevant special case}\label{s:special_cases}
		
		In this section, we investigate several special instances of the proposed  algorithm, and discuss practical strategies for solving the corresponding inexact subproblems.
		
		\subsection{The partly smooth case $g\equiv 0$}
		
		Let us discuss the case $g\equiv 0$, which includes the linearly constrained problem \eqref{COP}
		\[\min\{f(x)+F(x):Ax=b\},\]
		when $G(y)=\langle b,y\rangle$.
		
		Assuming (for simplicity but without much loss of generality, since the second step is explicit) that $\epsilon_{k+1}\equiv 0$, 
		\eqref{eq:alg1_composite_inexact} becomes
		\begin{equation}\label{eq:alg_linearcase}
			\left\{\begin{array}{rcl}
				\ds \alpha_k(x_{k+1}-x_k) +\beta_kA^* (y_{k+1}-y_k)+\nabla F(x_k)+\partial f(x_{k+1})+A^*y_{k+1} & \ni & \varepsilon_{k+1} \medskip \\
				\ds -\beta_kA(x_{k+1}-x_k) +\delta_k (y_{k+1}-y_k) -Ax_{k+1} + \nabla G(y_k) & = & 0.
			\end{array}\right. 
		\end{equation}
		From the second substep, we can write
		$$ y_{k+1}=y_k +\frac{\tilde\beta_k}{\delta_k}A(x_{k+1}-x_k)+\frac{1}{\delta_k}Ax_k-\frac{1}{\delta_k}\nabla G(y_k).$$
		Substituting this in the first one, we get
		$$\left(\alpha_kI + \frac{\tilde\beta_k^2}{\delta_k}A^*A\right)x_{k+1}+\partial f(x_{k+1})\ni \tilde x_k+\varepsilon_{k+1},$$
		where we have written
		$$\tilde x_k:= \alpha_kx_k-\frac{\tilde\beta_k(\tilde\beta_k+1)}{\delta_k}A^*Ax_k-\nabla F(x_k)+\frac{\tilde\beta_k}{\delta_k}A^*\nabla G(y_{k})-A^*y_k.$$
		In other words,
		$$x_{k+1}=\left(\alpha_kI + \frac{\tilde\beta_k^2}{\delta_k}A^*A+\partial f\right)^{-1}\big(\tilde x_k+\varepsilon_{k+1}\big),$$
		which also gives
		\begin{equation} \label{eq:prox_inexact}
			\left\|x_{k+1}-\left(\alpha_kI + \frac{\tilde\beta_k^2}{\delta_k}A^*A+\partial f\right)^{-1}\tilde x_k\right\|\le e_k,
		\end{equation}
		with $e_k\ge \|\varepsilon_{k+1}\|$. 
		
		This falls in the framework of variable metric proximal operations \cite{bonnans1995family}. 
		
		\begin{remark}
			A similar construction is possible when $f\equiv 0$.
		\end{remark}

		\subsection{Implementation issues} \label{SS:Semismooth}
		
		In our numerical experiments, the approximate computation \eqref{eq:prox_inexact} will be relevant. We briefly discuss one way to ensure it, for completeness. 
		
		Let us consider the iterations defined  as in Remark \ref{r:iterations_shift}, 
		\begin{equation}\label{eq:alg-semi-im}
			\left\{\begin{array}{rcl}
				\ds \alpha_k(x_{k+1}-x_k) +\tilde\beta_kA^* (y_{k+1}-y_k)+\nabla F(x_k)+\partial f(x_{k+1})+A^*y_k & \ni & \varepsilon_{k+1} \medskip \\
				\ds -\tilde\beta_kA(x_{k+1}-x_k) +\delta_k (y_{k+1}-y_k) -Ax_{k} + \nabla G(y_{k}) & = & 0.
			\end{array}\right. 
		\end{equation}
		The primal subproblem appearing in the update of $x_{k+1}$ given by the first equation in \eqref{eq:alg-semi-im} does not admit a closed-form solution in general: notice that it depends both on $x_{k+1}$ and $y_{k+1}$. Then using the expression for $x_{k+1}$ given by the first equation, we can replace it on the second one, which gives a nonlinear equation in $y_{k+1}$. We therefore employ a semi-smooth Newton method (see \cite{facchinei2003finite}) to solve for $y_{k+1}$, and then compute $x_{k+1}$ via the proximity operator. The analysis closely parallels that of \cite{luo2022primal}, but we summarize the more general derivation here for completeness. Define $\eta_{k}=\frac{1}{\alpha_{k}}$, and use the first step of algorithm \eqref{eq:alg-semi-im}, to get
		\[x_{k+1}=\operatorname{prox}_{\eta_{k}f}\left(x_{k}-\eta_{k}\tilde\beta_kA^*y_{k+1}+\eta_{k}(\tilde\beta_k-1)A^*y_{k}-\eta_{k}\nabla F(x_{k})+\eta_k\varepsilon_{k+1}\right).\]
		Denote $v_{k}=x_{k}+\eta_{k}(\tilde\beta_k-1)A^*y_{k}-\eta_{k}\nabla F(x_{k})+\eta_k\varepsilon_{k+1}$, and  $w_{k}=\delta_{k}y_{k}-(\tilde\beta_{k}-1)Ax_{k}-\nabla G(y_{k})$. Replacing the proximal formulation for $x_{k+1}$ in the second step of \eqref{eq:alg-semi-im}, we have
		\begin{equation}\label{eq:subproblem}
			\delta_{k}y_{k+1}-\tilde\beta_{k}A\operatorname{prox}_{\eta_{k}f}\left(v_{k}-\eta_{k}\tilde\beta_k A^*y_{k+1}\right)-w_{k}=0.    
		\end{equation}
		
		We can define a mapping:
		\begin{equation}\label{eq:mappingf}
			H_{k}(y)=\delta_{k}y-\tilde\beta_{k}A\operatorname{prox}_{\eta_{k}f}\left(v_{k}-\eta_{k}\tilde\beta_kA^*y\right)-w_{k},
		\end{equation}
		
		so that solving \eqref{eq:subproblem} is equivalent to finding a zero of $H_k$. Let $\partial \operatorname{prox}_{\eta_{k}f}(y)$ be the generalized Clarke subdifferential of the Lipchitz mapping $\operatorname{prox}_{\eta_{k}f}(y)$. 
		If $P_{k}(y)\in\partial\operatorname{prox}_{\eta_{k}f}\left(v_{k}-\eta_{k}\tilde\beta_k A^*y\right)$ is symmetric, then for any $y\in \mathbb{R}^{m}$, by chain rule, we have the generalized Jacobian of $H_k$ given by
		\[JH_{k}(y):=\delta_{k}I+\eta_{k}\tilde\beta_k^2AP_{k}(y)A^*\in\mathbb{R}^{m\times m}.\]
		Using the previous, we can implement a semi-smooth Newton iteration for solving $H_{k}(y)=0$: given an initial guess $y_{0}\in\mathbb{R}^{m}$, perform the iteration
		\[y^{j+1}=y^{j}-[JH_{k}(y^{j})]^{-1}H_{k}(y^{j}), \quad j \in \N.\]
		Notice that, using Moreau's identity, $\operatorname{prox}_{\eta f}(x)+\eta\operatorname{prox}_{f^*/\eta}(x/\eta)=x,$ where $f^*$ is the conjugate function of $f$, we find that $H_{k}(y)=\nabla \mathcal{H}_{k}(y)$, with $\mathcal{H}_{k}(\cdot)$ being defined by
		\begin{equation}\label{eq:capF}
			\begin{aligned}
				\mathcal{H}_{k}(y):=&\frac{\delta_{k}}{2}\|y\|^2-\langle w_{k},y\rangle+f^*\left(\operatorname{prox}_{f^*/\eta_{k}}(v_{k}/\eta_{k}-\tilde\beta_kA^*y)\right)\\
				&+\frac{1}{2\eta_{k}}\left\|\operatorname{prox}_{\eta_{k}f}\left(v_{k}-\eta_{k}\tilde\beta_kA^*y\right)\right\|^2.    
			\end{aligned}    
		\end{equation}
		The proof of this derivation can be found in Appendix \ref{appendix:function_H}. Notice that finding a zero of $H_k$ is equivalent to minimize a convex function $\mathcal{H}_k$. Then, to guarantee global convergence, the semi smooth Newton iteration is combined with a backtracking line-search procedure \cite{dennis1996numerical}. Given the Newton direction
		\[d^{j}=-[JH_{k}(y^{j})]^{-1}H_{k}(y^{j}),\quad j \in \N.\]
		find the smallest nonnegative integer $r\in \N$ such that
		\[\mathcal{H}_k(y^{j}+\rho^{r}d^{j})\leq\mathcal{H}_k(y^{j})+\nu\rho^{r}\langle H_{k}(y^{j}),d^{j}\rangle,\]
		where $\nu\in(0,1/2)$, $\rho\in(0,1]$. Summarizing: Given $x_0\in\mathcal X$, $y_0\in\mathcal Y$, and sequences
		$(\alpha_k)$, $(\beta_k)$, and $(\delta_k)$ which satisfy Assumption \ref{a:a_composite_inexact}, we iterate for $k \in \N$ as
		\begin{align*}
			\eta_k &= \frac{1}{\alpha_k},\\
			v_k &=x_{k}+\eta_{k}(\tilde\beta_k-1)A^*y_{k}-\eta_{k}\nabla F(x_{k})+\eta_k\varepsilon_{k+1},\\
			w_k &=\delta_{k}y_{k}-(\tilde\beta_{k}-1)Ax_{k}-\nabla G(y_{k}),
		\end{align*}
		and consider the mapping
		\[H_k(y)=\delta_k y-\tilde\beta_k A\prox_{\eta_k f}\!\left(v_{k}-\eta_{k}\tilde\beta_kA^*y\right)-w_k.\]
		Then compute $y_{k+1}$ as an inexact solution of $H_k(y)=0$, and update
		\[x_{k+1}=\operatorname{prox}_{\eta_{k}f}\left(x_{k}-\eta_{k}\tilde\beta_kA^*y_{k+1}+\eta_{k}(\tilde\beta_k-1)A^*y_{k}-\eta_{k}\nabla F(x_{k})+\eta_k\varepsilon_{k+1}\right).\]

		\section{Numerical Experiments}\label{s:numerics}
		In this section, we test numerically iterations  \eqref{eq:alg1_composite_inexact} in different optimization problem settings. We focus on the particular instances described in Section \ref{s:special_cases}. 
		\subsection{Linearly constrained case}
		
		Consider the \textit{linearly constrained least squares} problem
		\begin{equation}\label{eq:prob_linear_numerics}
			\min \frac{1}{2} \Vert Bx - c \Vert^2, \quad \text{s.t.} \quad Ax =b,
		\end{equation}
		with $B \in \mathbb{M}_{r\times n}$, $A\in \mathbb{M}_{m\times n}$, $b \in \R^m$, $c \in \R^n$. We will test the inexact minimization subroutine described in Section \ref{SS:Semismooth}, in the smooth objective case, that is $f\equiv 0$ and $F(\cdot) = \frac{1}{2} \Vert B(\cdot) - c \Vert^2$. As discussed previously, at each iteration $k$, the update $x_{k+1}$ is obtained by approximately minimizing an auxiliary function $\Psi_k(x)$ up to a prescribed accuracy $\varepsilon_k$. If this error is properly controlled, the convergence of the sequence $(x_k,y_k)$ is guaranteed. In our implementation, the error is measured by $\varepsilon_k = \Vert\nabla \Psi_k(v_k)\Vert$, where $v_k$ denotes the iterate produced by the inner subroutine. The stopping criterion of this subroutine is chosen so that this error satisfies the assumptions of Theorem~\ref{T:implicit_inexact}.

		Let us focus on the parameter selection described in Example \ref{ex:parameters_errors}, that is, $\alpha_k \equiv \alpha_0$, $\delta_k = \delta_0$, and $\beta_k = \beta_0 + k$. Theorem \ref{T:implicit_inexact} guarantees convergence if  $\sum_{k \in \N}k\varepsilon_k < +\infty$. Let: 
		\begin{equation}\label{eq:psi_linear_numerics}
			\Psi_k(v) = \frac{1}{2} \Vert Bv - c \Vert^2 + \inner{y_k}{Av - b} + \frac{\alpha_k}{2}\norm{v - z_k}^2 + \frac{1}{2\delta_k}\norm{(\beta_k+1)Av - b}^2,   
		\end{equation}
		with $z_k = \left(I_n + \frac{\beta_k}{\delta_k\alpha_k}(\beta_k+1)A^\top A\right)x_k$. Since $\Psi_k$ is strongly convex and quadratic, computing its minimizer is equivalent to solving a symmetric positive definite linear system. Instead of solving this system exactly, we use a conjugate gradient subroutine to compute an approximate solution satisfying a prescribed tolerance. The inner conjugate gradient routine is presented in Algorithm~\ref{alg:alg_conj}. Then we can state our algorithm as Algorithm \ref{alg:alg_linear_inexact}. In what follows, we will consider a problem where $r$ is considerably smaller than $n$, and we will construct an ill-conditioned objective and constraint operators. 
		Such instances are commonly used in numerical studies since ill-conditioning tends to amplify differences in stability and robustness between optimization algorithms. Matrices $A \in \R^{m\times n}$ and $B \in \R^{r\times n}$ are built from random orthogonal matrices and prescribed singular values. More precisely, random Gaussian matrices are orthogonalized through QR factorizations, while the singular values are chosen as logarithmically spaced sequences between 1 and $10^{-3}$. The vectors $b$ and $c$ defining the objective and the feasible set are generated independently in order to obtain a nontrivial solution. For this purpose, we generate two independent Gaussian vectors $x_{\text{obj}}$ and $x_{\text{feas}}$ and then 
		\[c = Bx_{\text{obj}}, \quad b=Ax_{\text{feas}}.\]
		Then, the minimizer of the unconstrained problem is in general, not feasible for the linear constraints. 
		
		\begin{algorithm}[h]
			\caption{Conjugate gradient routine for minimizing $\Psi_k$}
			\label{alg:cg_inner}
			\begin{algorithmic}[1]
				
				\STATE \textbf{Input:}
				$x_k$, $y_k$, $\beta_k$, tolerance $\varepsilon_k$
				
				\STATE Define
				\[
				H_k
				=
				B^\top B
				+
				\alpha_k I_n
				+
				\frac{(\beta_k+1)^2}{\delta_k}A^\top A
				\]
				
				\STATE Define
				\[
				\nabla\Psi_k(v)=H_kv-h_k
				\]
				
				\STATE Initialize $v_0=x_k,
				\,
				r_0=-\nabla\Psi_k(v_0),
				\,
				p_0=r_0, \, j=0$
				\WHILE{$\|r_j\|>\varepsilon_k$}
				
				\STATE $\sigma_j
				=
				\frac{\langle r_j,r_j\rangle}
				{\langle p_j,H_kp_j\rangle}$

				\STATE $v_{j+1}=v_j+\sigma_jp_j$

				\STATE $r_{j+1}=r_j-\sigma_jH_kp_j$

				\STATE $\tau_j
				=
				\frac{\langle r_{j+1},r_{j+1}\rangle}
				{\langle r_j,r_j\rangle}$

				\STATE $p_{j+1}=r_{j+1}+\tau_jp_j$

				\STATE $j\leftarrow j+1$
				
				\ENDWHILE
				
				\RETURN $v_{j+1}$
				
			\end{algorithmic}
			\label{alg:alg_conj}
		\end{algorithm}

		\begin{algorithm}[h]
			\caption{Inexact least squares - Implementation of \eqref{eq:alg_linearcase} using conjugate gradients.}
			\label{alg:outer_linear_inexact}
			\begin{algorithmic}[1]
				\STATE \textbf{Given:}
				$x_0\in\mathbb{R}^n$,
				$y_0\in\mathbb{R}^m$,
				$\alpha_0,\delta_0,\beta_0>0$,
				$N\in\mathbb{N}$,
				$\varepsilon_0>0$
				
				\FOR{$k=0,\dots,N-1$}
				
				\STATE $\varepsilon_k = \dfrac{\varepsilon_0}{(k+1)^{2.1}}$
				
				\STATE Construct $\Psi_k$ as in \eqref{eq:psi_linear_numerics}. 
				
				\STATE Compute
				\[
				x_{k+1} \approx \arg\min_v \Psi_k(v)
				\]
				using Algorithm~\ref{alg:cg_inner}
				with tolerance $\varepsilon_k$
				
				\STATE
				\[
				y_{k+1}
				=
				y_k
				+
				\frac1{\delta_k}
				\Big(
				\beta_k A(x_{k+1}-x_k)
				+
				Ax_{k+1}
				-b
				\Big)
				\]
				
				\STATE $\beta_{k+1} = \beta_k + 1$
				
				\ENDFOR
				
				\RETURN $(x_N,y_N)$
			\end{algorithmic}
			\label{alg:alg_linear_inexact}
		\end{algorithm}

		We perform the experiment considering $n=500$, $m=50$, $r=100$. First, we look to find the best values of $\alpha_0$ and $\delta_0$. The Heatmap in Figure \ref{fig:heatmap1} shows the value of $\Delta_k$ for different values of the parameters, evaluated at the last of 100 iterations. This suggests that $\alpha_0$ should be small, so we refine the grid and obtain the Heatmap displayed in Figure \ref{fig:heatmap2}. Hence, we obtain that the smallest value for the Dual gap is achieved for $\alpha_0=10^{-1}$, $\delta_0=3$. 
		
		\begin{figure}[h]
			\centering
			\subfigure[$\alpha_0 \in \lbrack 1 , 5.25\rbrack$, $\delta_0 \in \lbrack1,15\rbrack$]{\includegraphics[width=0.48\textwidth]{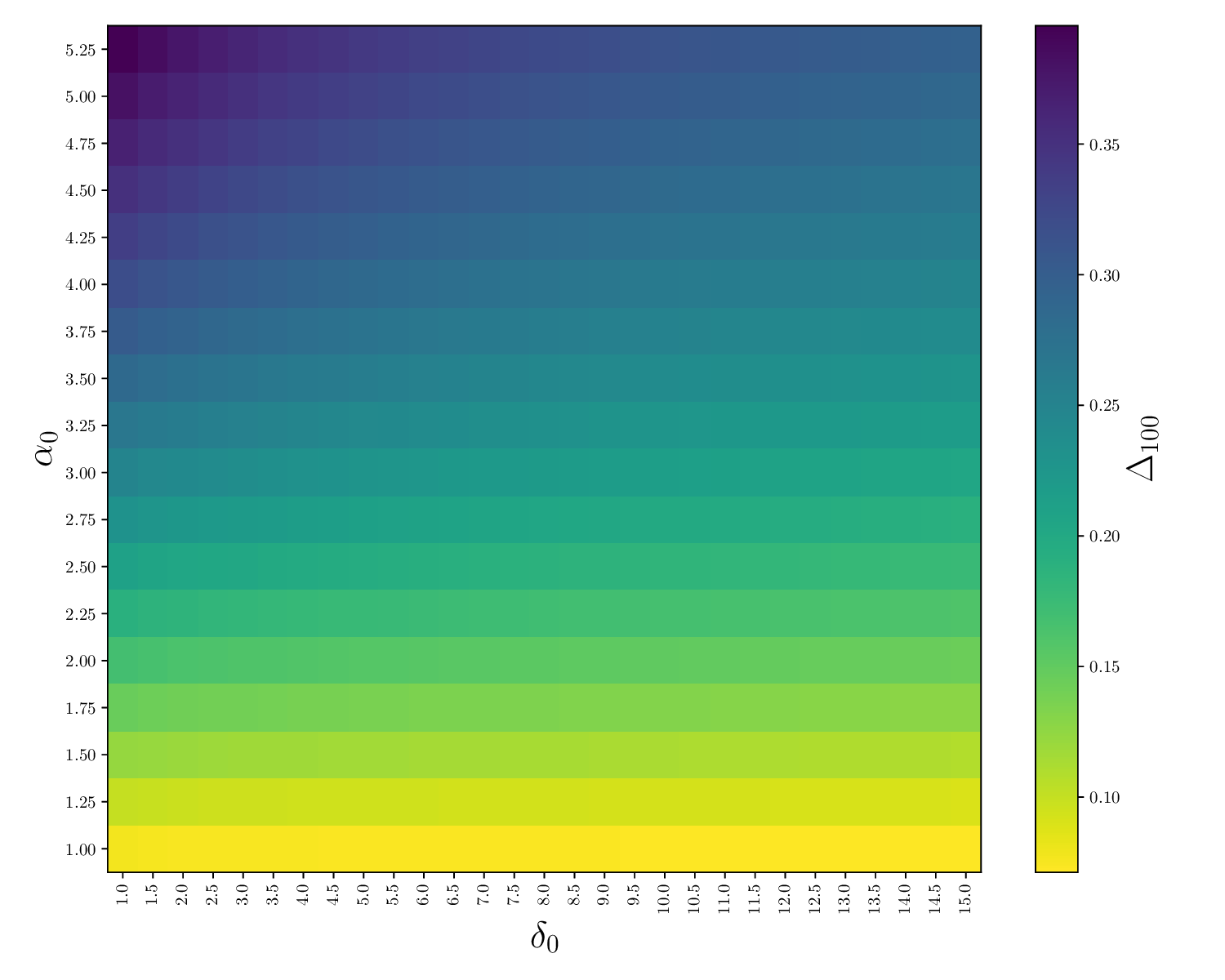}\label{fig:heatmap1}}
			\subfigure[$\alpha_0 \in \lbrack0.1, 0.15\rbrack$, $\delta_0 \in \lbrack1,5\rbrack$]{\includegraphics[width=0.48\textwidth]{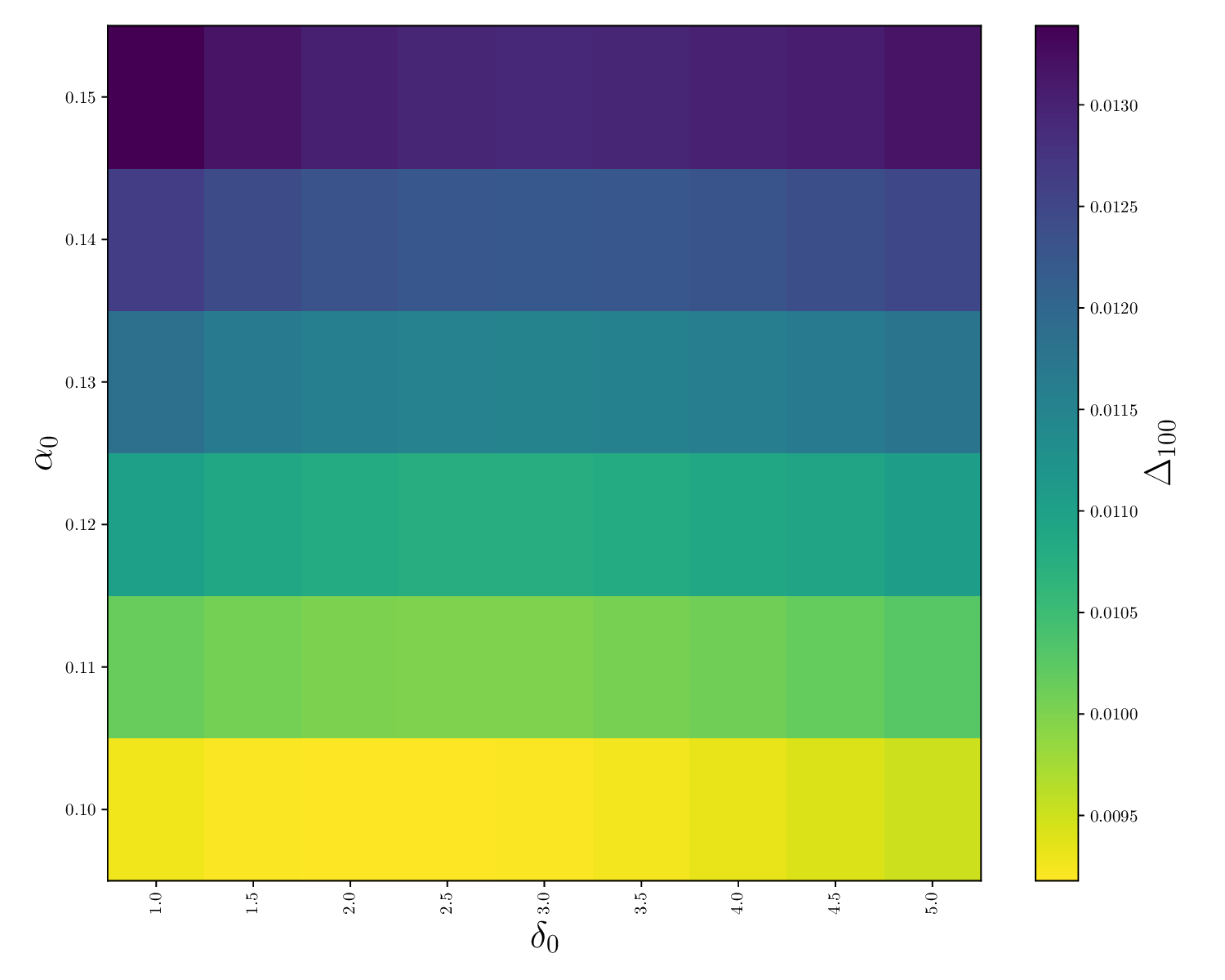}\label{fig:heatmap2}}
			\caption{The Heatmaps display the value of $\Delta_{100}$, for different choices of $\alpha_0$, $\delta_0$ for Algorithm \ref{alg:alg_linear_inexact} applied to problem \eqref{eq:prob_linear_numerics}.}
			\label{fig:heatmap}
		\end{figure}

		We test the algorithm using $\beta_0 = 1$, $\alpha_0=\delta_0=1$, and also the optimized parameters found before, $\alpha_0 =10^{-1}$, $\delta_0=3$. As a benchmark, we compare our algorithm with Chambolle-Pock \cite{chambolle2011first}, and we display the results obtained in Figure \ref{fig:inexact_leastsq}, for different optimality measures. For the Chambolle Pock algorithm the resolvent on the primal iteration is computed explicitly.  
		
		\begin{figure}[h]
			\centering
			\subfigure[Objective values]{\includegraphics[width=0.48\textwidth]{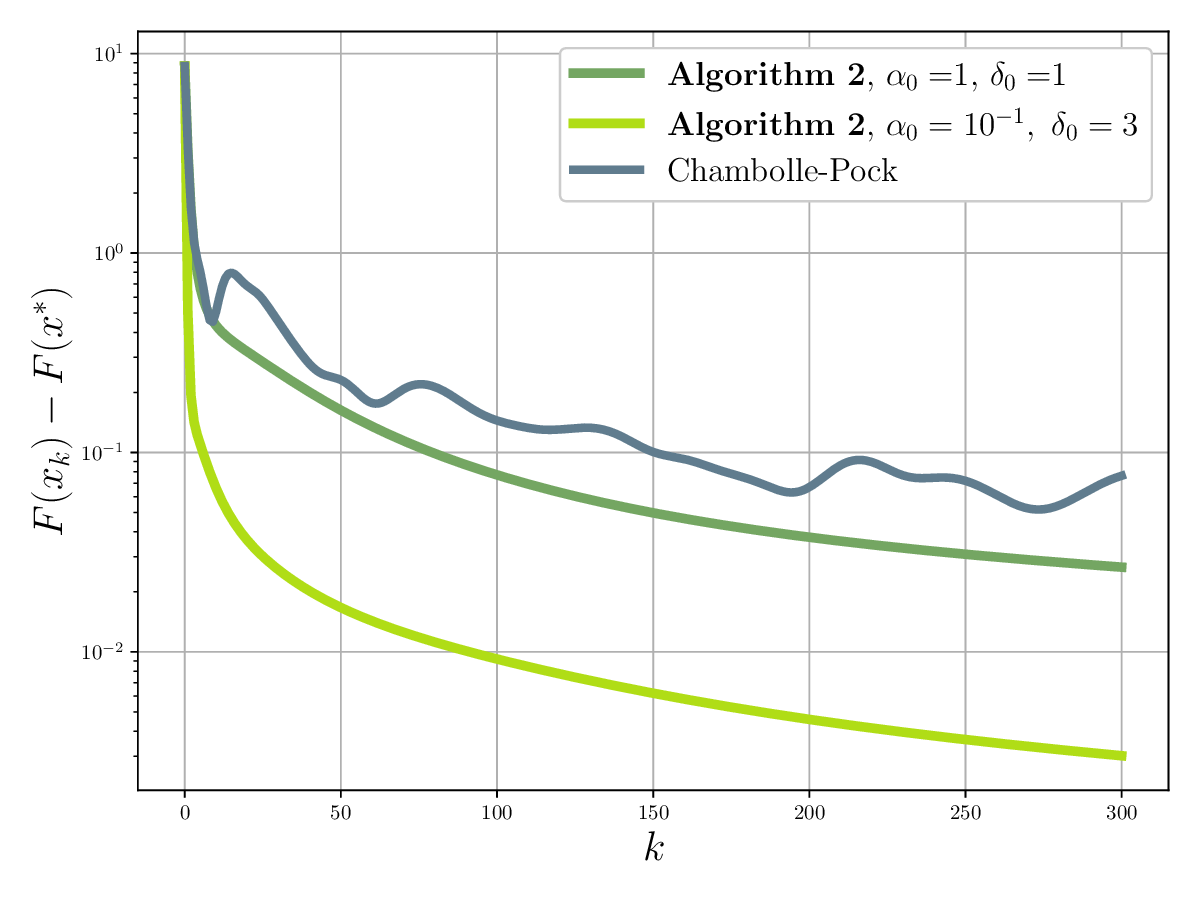}}
			\subfigure[Optimality residual]{\includegraphics[width=0.48\textwidth]{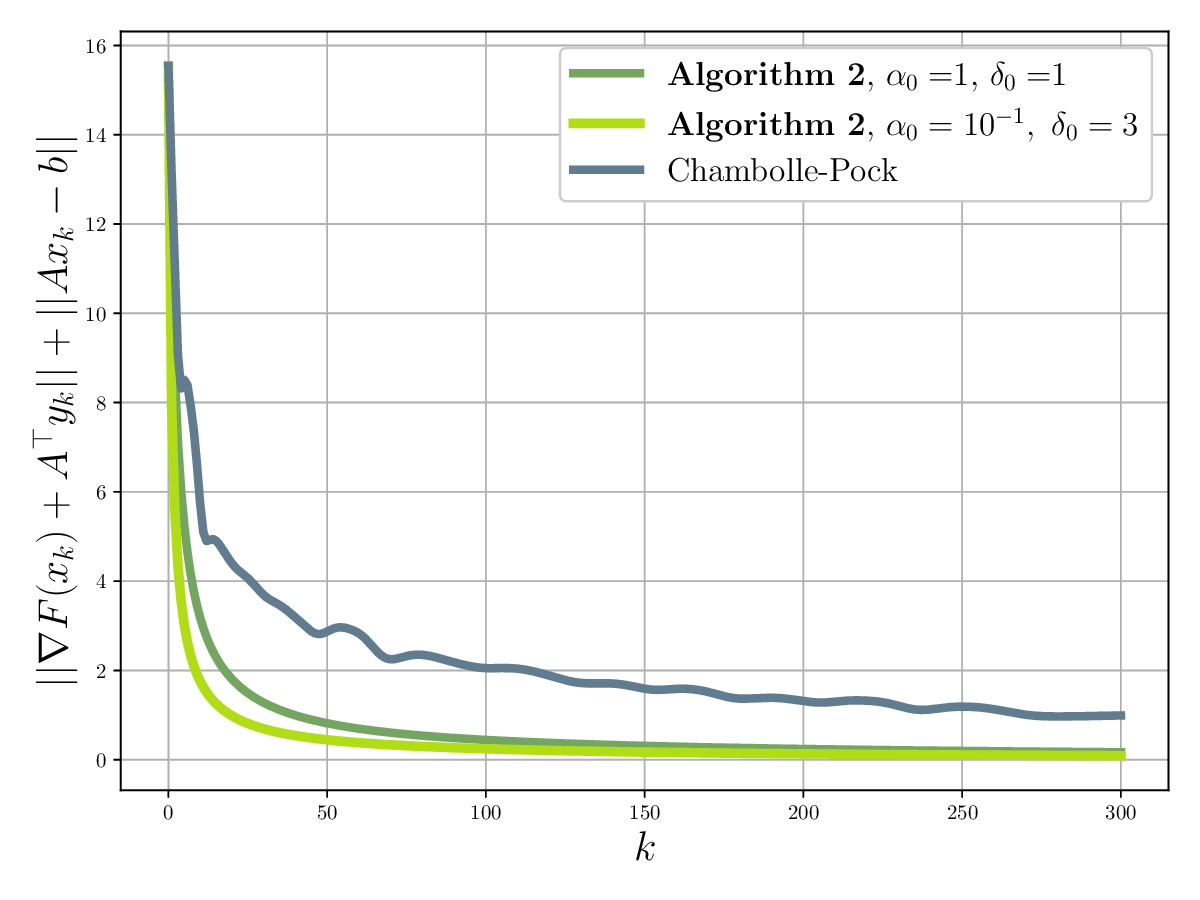}}\\
			\subfigure[Feasibility measure]{\includegraphics[width=0.48\textwidth]{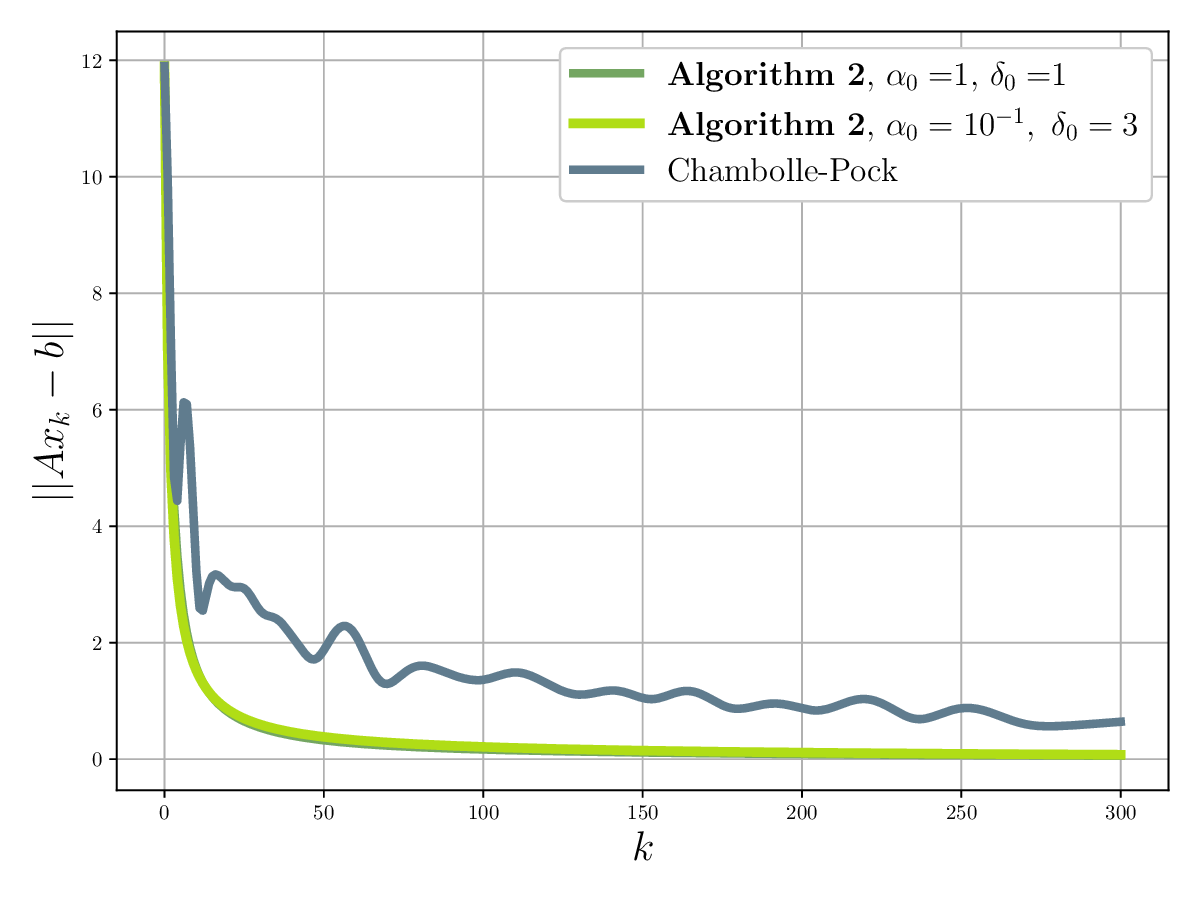}}
			\subfigure[Dual gap]{\includegraphics[width=0.48\textwidth]{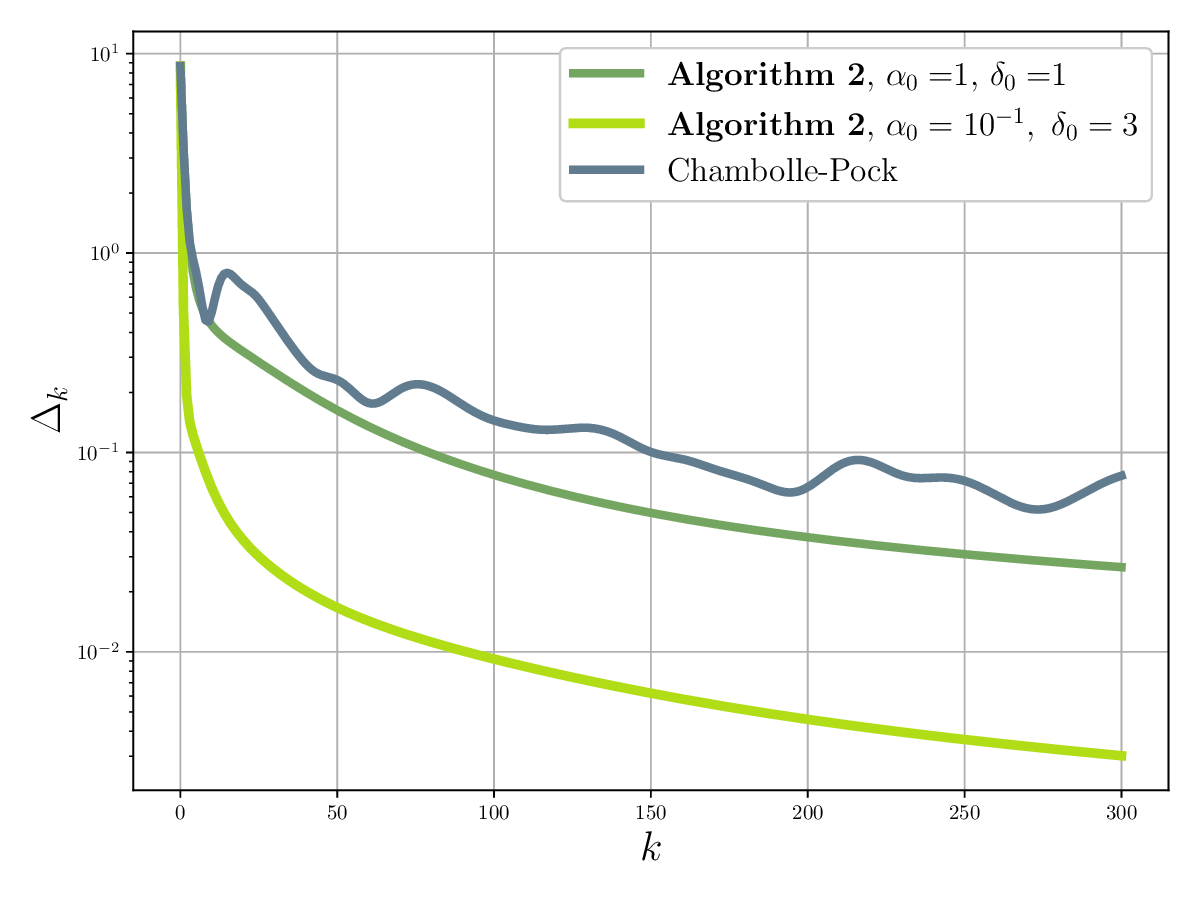}}
			\caption{Linearly constrained least squares}
			\label{fig:inexact_leastsq}
		\end{figure}
		\subsection{Linear constrained composite case}
		We consider the linearly constrained $\ell_{1}-\ell_{2}$ problem:
		\begin{equation}\label{eq:linear_semi_composite}
			\min_{x\in\mathbb{R}^{n}}\frac{\lambda}{2}\|x\|^2+\|x\|_{1},\quad\text{s.t.}\quad Ax=b,    
		\end{equation}
		where $\lambda>0$,   $A\in\mathbb{R}^{m\times n}$, and $b \in \R^m$ with $m\ll n$. This setting can be interpreted as a compressed sensing instance with a smooth regularization given by $\lambda$ \cite{candes2008introduction}. In the following, we solve this problem via a semi smooth Newton primal dual algorithm following the discussion in Section \ref{SS:Semismooth}.

		In this case, since $f(x)= \Vert x\Vert_1$ is piecewise affine, $\operatorname{prox}_{\eta f}$ is strongly semi smooth and so is the nonlinear mapping $H_{k}$ defined in \eqref{eq:mappingf}. For $\eta>0$ and $x\in\mathbb{R}^{n}$, define a diagonal matrix
		\begin{equation}\label{eq:diagmatrix}
			P_{\eta}(x)=\operatorname{diag}(p)\in\mathbb{R}^{n\times n}\quad \text{with}\quad p_{i}=\left\{\begin{aligned}
				&1\quad\text{if}\quad |x_{i}|\geq \eta,\\ 
				&0\quad\text{if}\quad |x_{i}|<\eta.
			\end{aligned} \right.
		\end{equation}
		Then it is easy to see that $P_{\eta}(x)\in \partial\operatorname{prox}_{\eta f}(x)$, and thus, we have a generalized Clarke subgradient for \eqref{eq:subproblem}
		\[JH_{k}(y)=\delta_{k}I+\eta_{k}\beta_{k}^2AP_{\eta_{k}}[q_{k}(y)]A^*\in\mathbb{R}^{m\times m},\]
		where $q_{k}(y)=v_{k}-\eta_{k}\beta_{k}A^*y$.  Moreover, in this context, the function $\mathcal{H}_k$ defined in \eqref{eq:capF} becomes
		\[\mathcal{H}_{k}(y)=\frac{\delta_{k}}{2}\|y\|^{2}-\langle w_{k},y\rangle+\frac{1}{2\eta_{k}}\|\operatorname{prox}_{\eta_{k}f}[q_{k}(y)]\|^2.\]
		We consider the stopping criterion:
		\[\text{Res}(k):\max\{\text{Res}(x_{k}),\text{Res}(y_{k})\}\leq \texttt{KKT.Tol},\]
		where the relative residuals are defined by
		\[ \text{Res}(y_{k}):=\frac{\|Ax_{k}-b\|}{1+\|b\|}\quad\text{and}\quad\text{Res}(x_{k}):=\frac{\|x_{k}-\operatorname{prox}_{f}((1-\lambda)x_{k}-A^*y_{k} - \beta_k A^*(y_{k+1}-y_k))\|}{1+\|x_{k}\|}.\]
		The previous construction is not arbitrary: optimality conditions for the problem \eqref{eq:linear_semi_composite} are $Ax^*=b$ and $x^*=\operatorname{prox}_{f}((1-\lambda)x^*-A^Ty^*)$. Then, the residual $\text{Res}(x_{k})$ resembles the proximal step carried out to compute $x_{k+1}$, and can be interpreted as the optimality condition with a correction term given by the dual update $(y_{k+1} - y_k)$. 
		
		The inner iteration is stopped either some fixed tolerance is achieved, meaning $\|H_{k}(y)\|\leq$ \texttt{SsN.Tol} or a maximum of $j_{\max}$ iterations is performed. Considering all of the previous, we design Algorithm \ref{a:composite_ssn}. 
		
		\begin{algorithm}
			\caption{Linear constrained semi smooth Newton}
			\label{a:composite_ssn}
			\setstretch{1.3}
			\begin{algorithmic}[1]
				\STATE \textbf{Given:}
				$x_0\in\mathbb{R}^n$,
				$y_0\in\mathbb{R}^m$,
				$\alpha_0,\delta_0,\beta_0>0$. Define the tolerances:  \texttt{SsN.Tol} and \texttt{KKT.Tol}. 
				\FOR{$k=0,1,\dots$} 
				\STATE Update $\alpha_k$, $\beta_k$, $\delta_k$ 
				.
				\STATE Set $\eta_{k}=\frac{1}{\alpha_{k}}$.
				\STATE Set $v_{k}=x_{k}-\eta_{k}\lambda x_{k}+\eta_k\beta_{k}A^*y_{k}$.
				\STATE Set $w_{k}=\delta_{k}y_{k}-\beta_{k}Ax_{k}-b$.
				\STATE Solve $y_{k+1}$ from the nonlinear equation
				\[H_{k}(y)=\delta_{k}y-(\beta_{k}+1)A\operatorname{prox}_{\eta_{k}f}\left(v_{k}-\eta_{k}(\beta_{k}+1)A^*y\right)-w_{k}\]
				via the following inner loop with $y=y_{k}$ and $j=0$:
				\STATE \textbf{while} $\|H_{k}(y)\|>\texttt{SsN.Tol}$ and $j<j_{max}$ \textbf{do}
				\STATE \quad Compute $q_{k}=v_{k}-\eta_{k}(\beta_{k}+1)A^*y$.
				\STATE \quad Find $P_{\eta_{k}}(q_{k})\in\partial \operatorname{prox}_{\eta_{k}f}(q_{k})$ via \eqref{eq:diagmatrix}.
				\STATE \quad Compute $JH_{k}(y)=\delta_{k}I+\eta_{k}(\beta_{k}+1)^2AP_{\eta_{k}}(q_{k})A^*$.
				\STATE \quad Solve $H_{k}(y)d=-H_{k}(y)$.
				\STATE \quad Find the smallest integer $r$ such that $\mathcal{H}_{k}(y+\rho^{r}d)\leq\mathcal{H}_{k}(y)+\nu\rho^{r}\langle H_{k}(y),d\rangle$.
				\STATE \quad Update $y=y+\rho^{r}d$ and $j=j+1$.
				\STATE \textbf{end while}
				\STATE Update $y_{k+1}=y$ and $x_{k+1}=\operatorname{prox}_{\eta_{k}f}\left(v_{k}-\eta_{k}(\beta_{k}+1)A^*y_{k+1}\right)$.
				\STATE \textbf{if} Res(k)$\leq$ \texttt{KKT.Tol} \textbf{then}
				\STATE \quad \textbf{break}
				\STATE \textbf{end if}
				\ENDFOR
				\RETURN $x_{k+1}$, $y_{k+1}$
			\end{algorithmic}
		\end{algorithm}

		We test the Algorithm and compare it with the iterations given by 
		\begin{equation}\label{eq:sist_luo}
			\left\{\begin{array}{rcl}
				\ds \alpha_k(x_{k+1}-x_k) +\nabla F(x_k)+\partial f(x_{k+1})+A^*y_{k+1} & = & 0, \medskip \\
				\ds -\beta_kA(x_{k+1}-x_k) +\delta_k (y_{k+1}-y_k) -Ax_{k} + b & = & 0.
			\end{array}\right. 
		\end{equation}
		which corresponds to the method studied in \cite{luo2022primal}. Notice that, up to a mismatch in an index of the second equation, this method can be seen as iterations \eqref{eq:alg1_composite_inexact} but neglecting the term on the first equation multiplied by $\beta_k$. As in \cite{luo2022primal}, iterations \eqref{eq:sist_luo} are implemented by following a Semi-Newton routine.

		The algorithms are evaluated on an ill-conditioned, feasible linear constraint system $Ax = b$ of dimensions $m = 100$ and $n = 500$. The matrix $A$ is randomly generated using a partial singular value decomposition to simulate an ill-posed problem with  controlled singular values as a logarithmically spaced sequence between 1 and $10^{-2}$. We build $x_{\text{true}}$ as a sparse signal featuring 20 non-zero entries sampled from a standard normal distribution, and the vector $b$ is set to $b = Ax_{\text{true}}$ to ensure strict feasibility. 
		We consider $\alpha_k$ and $\beta_k$ satisfying the first two conditions in Assumption \ref{a:a_composite_inexact} with equality, giving
		\[\alpha_k = \dfrac{(\beta_{k}+1)L_F}{2\beta_k+1}, \quad \beta_{k+1} = \frac{1}{2}\left( \beta_k + \sqrt{\beta_k^2 + 2(2\beta_k+1)}\right).\]
		It is not difficult to check, that $\beta_k \sim k$. We will keep $\delta_k \equiv \delta_0$. To have a fair comparison, we will use the same choice for the sequences in both algorithms.  Notice, that \cite{luo2022primal} admits to include the parameter of strong convexity of the smooth objective function $F$ and use it to design the sequences, leading to a linear rate of convergence. We do not consider this case since the strongly convex setting is not studied in the current work. For the line search procedure, we use $\nu=0.2$ and $\rho=0.6$. Starting from randomly generated initial guesses $x_{0}$ and $y_{0}$, $\beta_0=1.1$ and $\delta_0=10^{-4}$, we solve problem \eqref{eq:linear_semi_composite} with $\lambda = 10^{-2}$. We compare our algorithm with \cite{luo2022primal} in Figure \ref{fig:composite_obj_res}.

		\begin{figure}[h]
			\centering
			\subfigure{\includegraphics[width=0.48\textwidth]{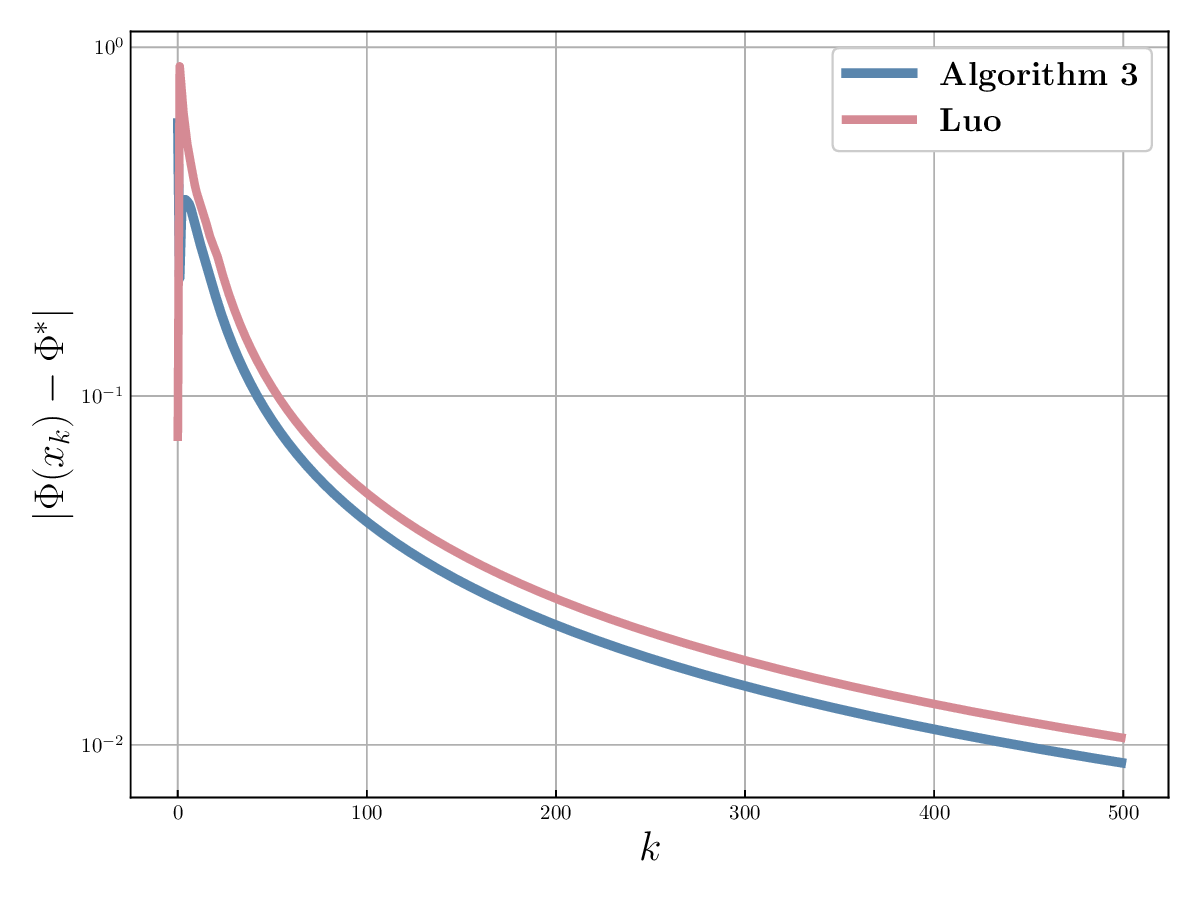}}
			\subfigure{\includegraphics[width=0.48\textwidth]{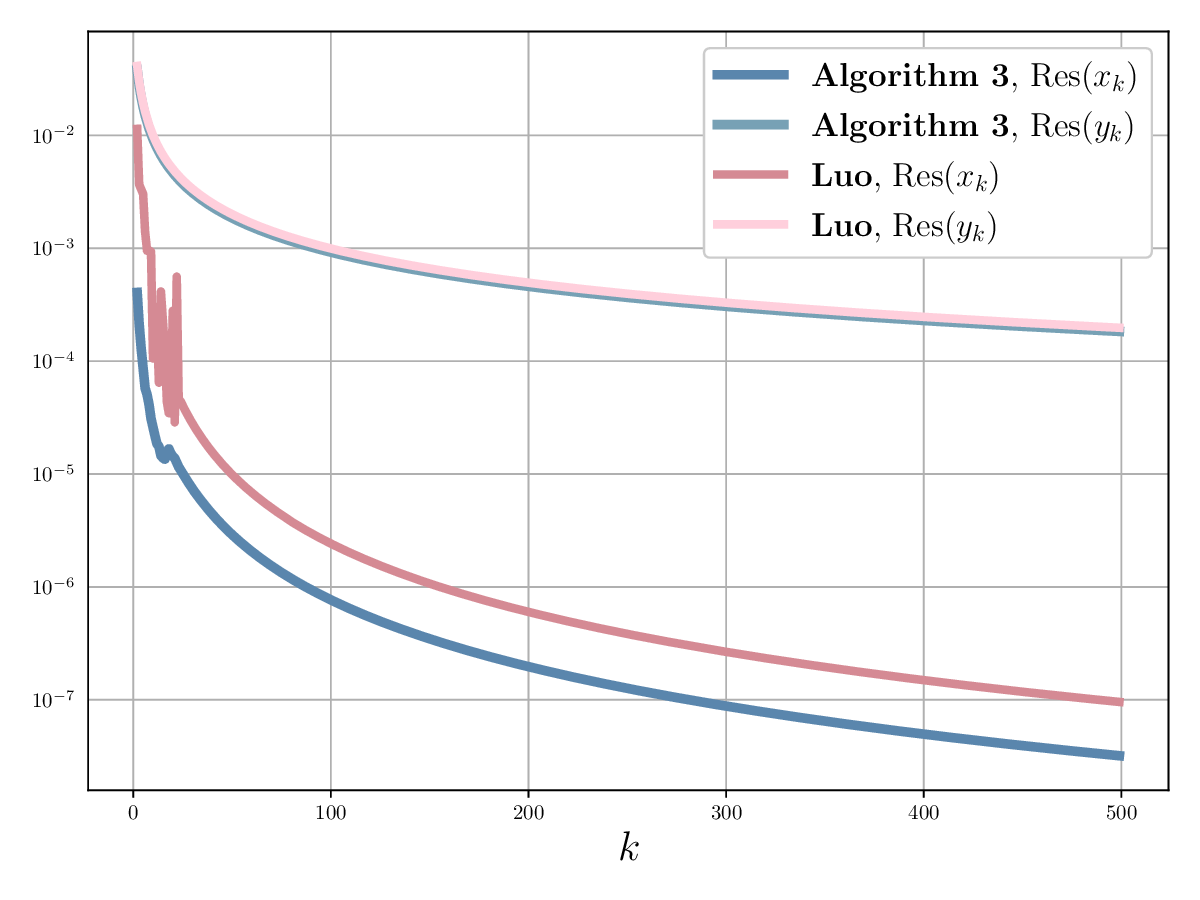}}\\
			\subfigure{\includegraphics[width=0.48\textwidth]{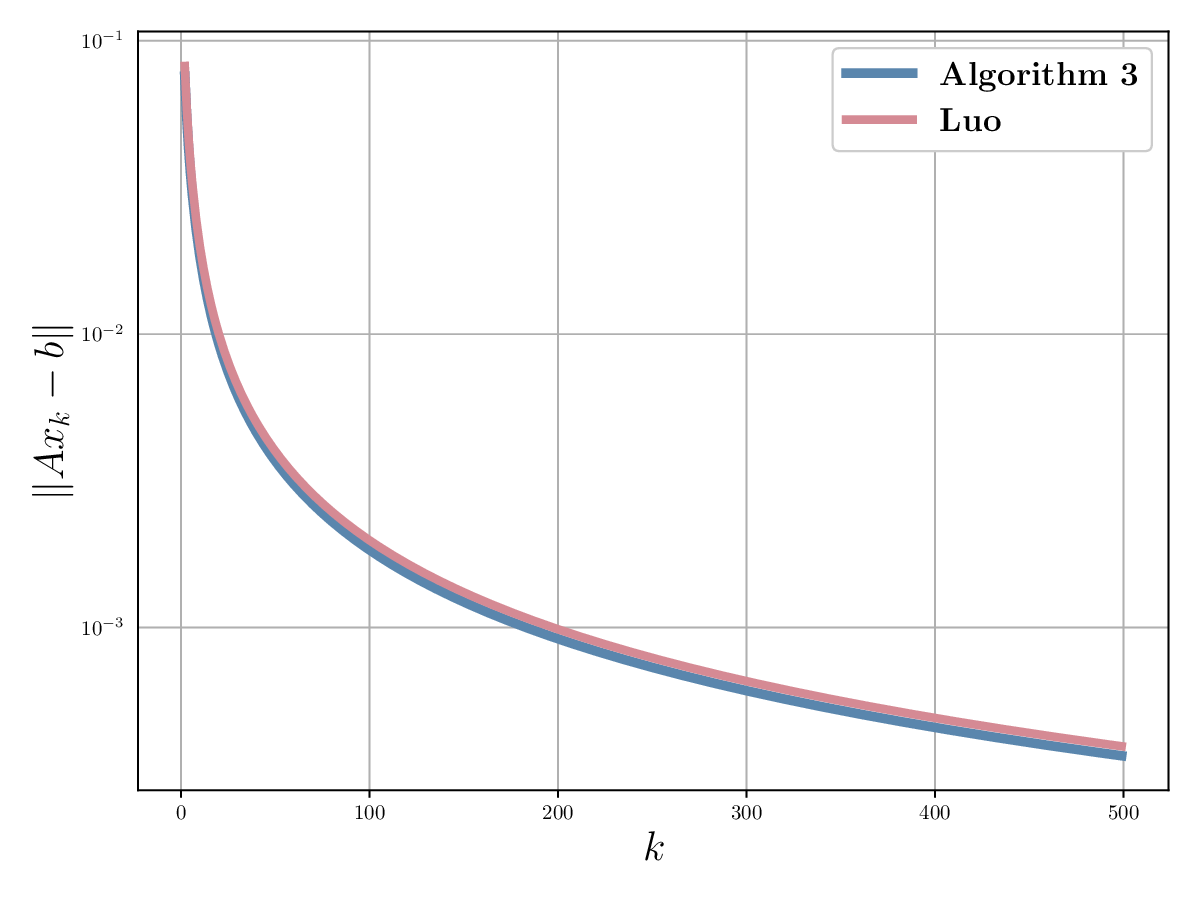}}
			\subfigure{\includegraphics[width=0.48\textwidth]{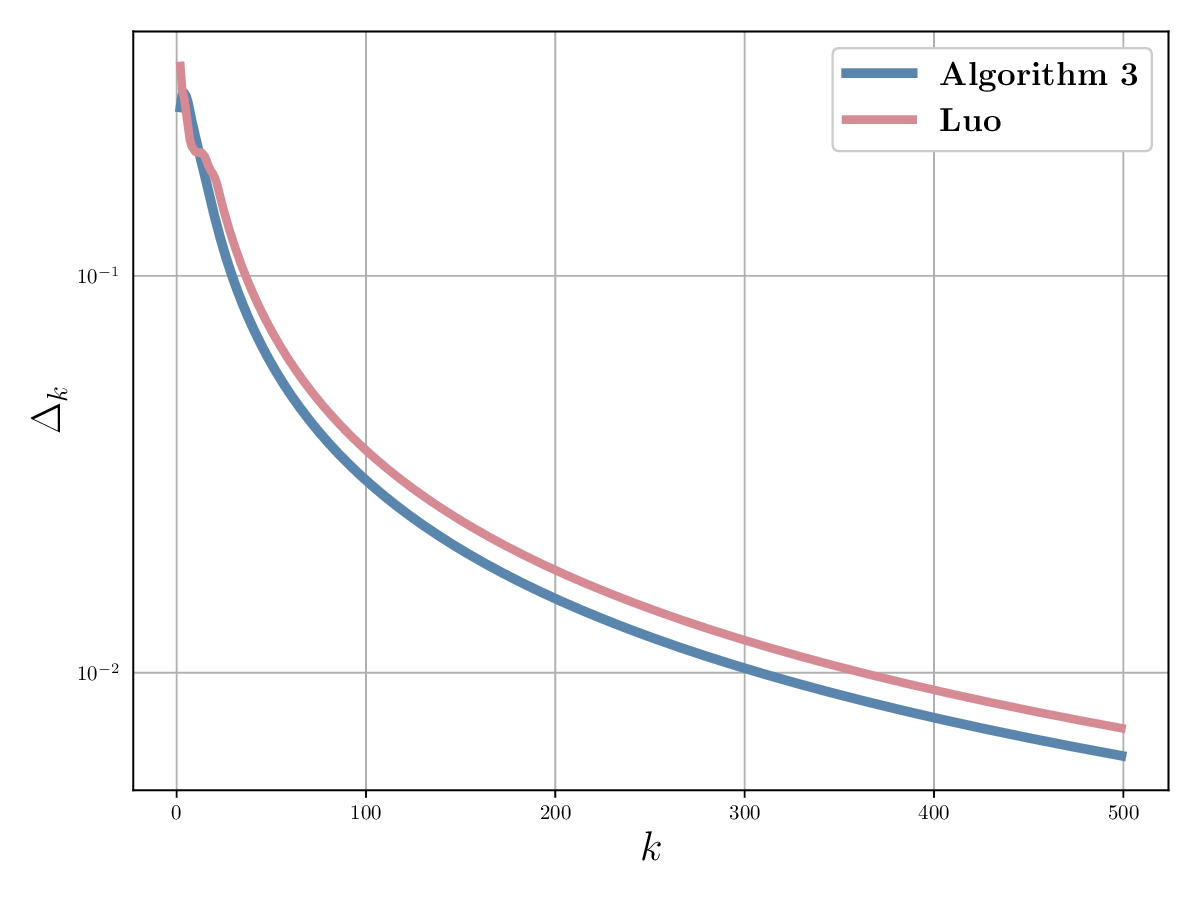}}
			\caption{Performance of Algorithm \ref{a:composite_ssn} for solving \eqref{eq:linear_semi_composite} with $m=100$, $n=500$. We consider $\lambda=10^{-2}$ and we compare the performance of Algorithm \ref{a:composite_ssn} with Luo's method. We plot objective values, Residuals, primal factibility and the Duality gap $\Delta_k$.}
			\label{fig:composite_obj_res}
		\end{figure}
		\subsection{Smooth + Nonsmooth objective}
		
		Consider the problem 
		\begin{equation}\label{eq:prob_wav}
			\min \varphi^\mathcal{W}(x) := \frac{1}{2}\norm{\mathcal{M}x - b}^2_2 + \omega\norm{\mathcal{W}x}_1,   
		\end{equation}
		where $\omega>0$, and $\mathcal{M}$, $\mathcal{W}$ are linear operators. Let $F(\cdot) = \frac{1}{2}\norm{\mathcal{M}\cdot - b}^2_2$, $h(\cdot) = \omega\norm{\cdot}_1$ and $A = \mathcal{W}$. By making $g=h^*$, the previous is a particular instance of \eqref{eq:lag_phi_gamma} with $f \equiv G \equiv 0$. 
		The setting of Problem \ref{eq:prob_wav} is usually used for modeling Image denoising problems. Consider $\bar{x}$ an image to recover from a noisy observation $b$. Let $\mathcal{M}$ be a known blurring operator, and $\mathcal{W}$ as the three-stage Wavelet transform \cite{briceno2023primal}. The scalar parameter $\omega>0$ controls the trade-off between adherence to the observed data $b$ and the regularity (or sparsity) of the reconstruction. Specifically, we consider $\mathcal{M}$ as a Gaussian blur of size $9\times 9$ and standard deviation $4$. The observation is obtained as $b = \mathcal{M}\bar{x} + e$, where $e$ is an additive zero-mean white Gaussian noise with standard deviation $\sigma$. 
		
		Since  $\omega h = \norm{\cdot}_1$, we have that $g=h^* = \iota _C$, where $C = \mathbb{B}_{\norm{\cdot}_{\infty}}(0,\omega)$, that is, the $\omega$-radius ball centered at the origin for the $\ell_{\infty}$ norm. Since $g$ is an indicator function, the proximal operator required in the $y_{k+1}$ update is the projection over $\mathbb{B}_{\norm{\cdot}_{\infty}}(0,\omega)$. By setting $w_ k = \alpha_{k}\delta_{k} + (1+\beta_{k})^2$ and $w_k = \alpha_k \delta_k + \beta_k(\beta_k+1)$, respectively, since $\mathcal{W}\mathcal{W}^\top = I$,
		we can implement Algorithms \eqref{eq:alg1_composite_inexact} and \eqref{eq:alg_1_a}, as Algorithms \ref{alg:alg_xkplus1_wavelets} and \ref{alg:alg_xk_wavelets}, respectively (we are setting $f\equiv 0$ and $G\equiv 0$).
		
		\begin{algorithm}
			\caption{Algorithm \eqref{eq:alg1_composite_inexact} - Wavelets setting}
			\label{alg:alg_xkplus1_wavelets}
			\begin{algorithmic}[1]
				\setstretch{1.2}
				\STATE \textbf{Given:} $x_0\in \R^n$, $y_0 \in \R^m$, and $N \in \mathbb{N}$.
				
				\FOR{$k = 1$ \TO $N-1$}
				\STATE Set $\delta_k\equiv1$, $\beta_k=k+1$ and $\alpha_k=\frac{(\beta_k+1)L_{F}}{2\beta_{k}+1}$.
				\STATE $w_k = \alpha_{k}\delta_{k} + (1+\beta_{k})^2$.
				
				\STATE $y_{k+1} = \text{prox}_{\frac{\alpha_{k}}{w_{k}}g} \left[ \left(1-\frac{(\beta_{k}+1)}{w_{k}}\right)y_{k} + \frac{1}{w_{k}}\mathcal{W}(\alpha_{k}x_{k}-(1+\beta_{k})\nabla F(x_{k})) \right]$.
				
				\STATE $x_{k+1} = x_k - \frac{\beta_k}{\alpha_k}\mathcal{W}^\top(y_{k+1}-y_k) - \frac{1}{\alpha_k}\mathcal{W}^\top y_{k+1} - \frac{1}{\alpha_k}\nabla F(x_k)$.
				\ENDFOR
				
				\RETURN $x_N$, $y_N$
			\end{algorithmic}
		\end{algorithm}
		
		\begin{algorithm}
			\caption{Algorithm \eqref{eq:alg_1_a} - Wavelets setting}
			\label{alg:alg_xk_wavelets}
			\begin{algorithmic}[1]
				\setstretch{1.2}
				\STATE \textbf{Given:} $x_0\in \R^n$, $y_0 \in \R^m$, and $N \in \mathbb{N}$.
				
				\FOR{$k = 1$ \TO $N-1$}
				\STATE Set $\delta_k\equiv1$, $\beta_k=k+1$ and $\alpha_k=\frac{\beta_kL_{F}}{2\beta_{k}-1}$.
				\STATE $w_k = \alpha_k \delta_k + \beta_k(\beta_k+1)$.
				
				\STATE $y_{k+1} = \text{prox}_{\frac{\alpha_{k}}{w_k}g} \parenc{ \paren{1-\frac{\beta_k}{w_k}}y_k + \frac{1}{w_k}\mathcal{W}(\alpha_k x_k - \beta_k \nabla F(x_k)) }$.
				
				\STATE $x_{k+1} = x_k - \frac{\beta_k}{\alpha_k}\mathcal{W}^\top(y_{k+1}-y_k) - \frac{1}{\alpha_k}\mathcal{W}^\top y_{k+1} - \frac{1}{\alpha_k}\nabla F(x_k)$.
				\ENDFOR
				
				\RETURN $x_N$, $y_N$
			\end{algorithmic}
		\end{algorithm}
		
		In what follows, we test both Algorithms \ref{alg:alg_xkplus1_wavelets} and \ref{alg:alg_xk_wavelets} over test images\footnote{Images obtained from \url{http://sipi.usc.edu/database/}} considering $\sigma = 10^{-1}$, giving noisy observations.  The parameters of the algorithm are chosen as $\delta_k \equiv 1$, $\beta_k = 1+k$, and $\alpha_k$ is chosen such that parameters in Assumption \ref{a:a_composite_inexact} hold with equality, where $L = \norm{\mathcal{M}}^2=1$.  Both algorithms produce visually similar recovered images. Therefore, Figure \ref{fig:image} reports the results obtained for three $512\times512$ images after 100 iterations of Algorithm \ref{alg:alg_xk_wavelets}, with $\omega \in \left\lbrace 1, 10 \right\rbrace$. The relatively large value of $\omega=10$ reflects a modeling choice consistent with highly noisy observations: the algorithm is designed to rely less on the potentially corrupted data term and more on the structural prior imposed by the Wavelet regularization.

		\begin{figure}[t]\label{fig:image}
			\centering
			\setlength{\tabcolsep}{2pt}
			
			\begin{tabular}{cccc}
				Original &
				Noisy observation &
				$\omega=1$ &
				$\omega=10$
				\\[2mm]
				
				\includegraphics[width=0.23\textwidth]{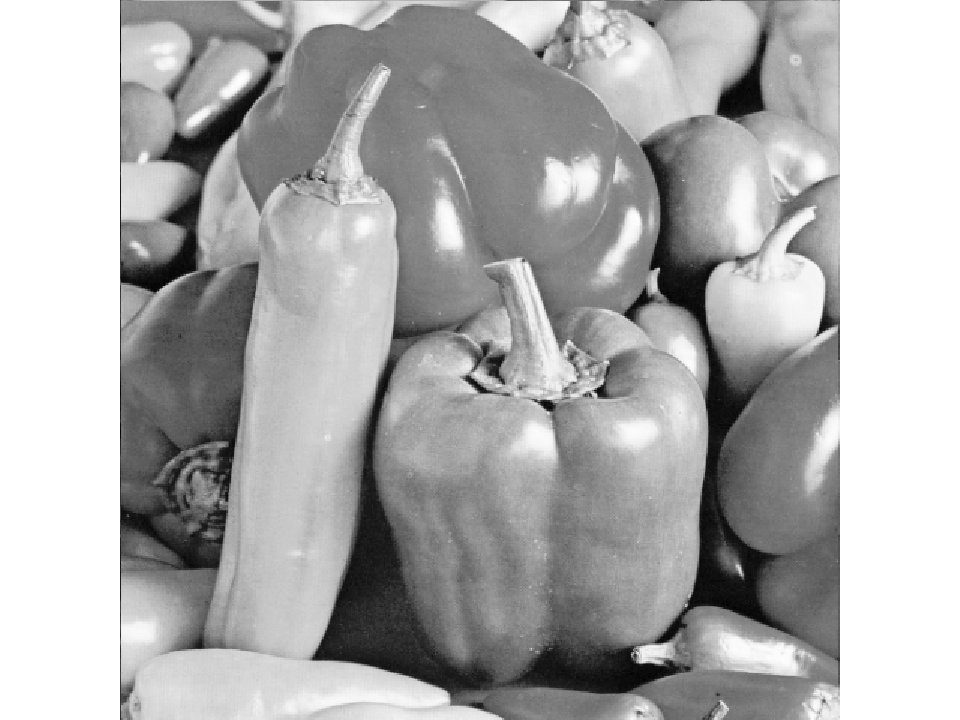} &
				\includegraphics[width=0.23\textwidth]{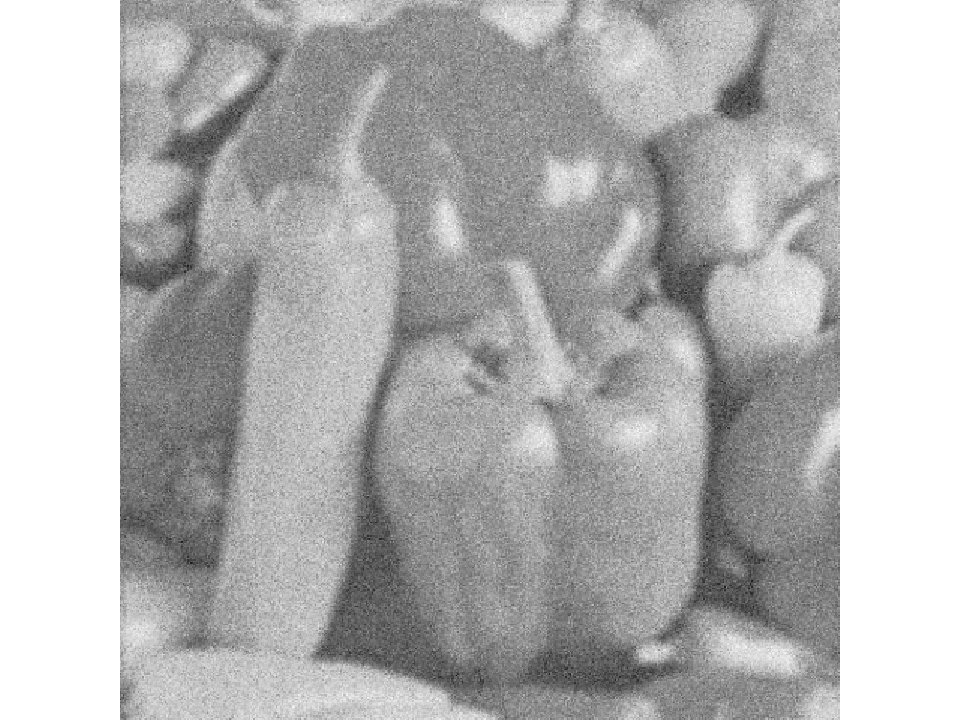} &
				\includegraphics[width=0.23\textwidth]{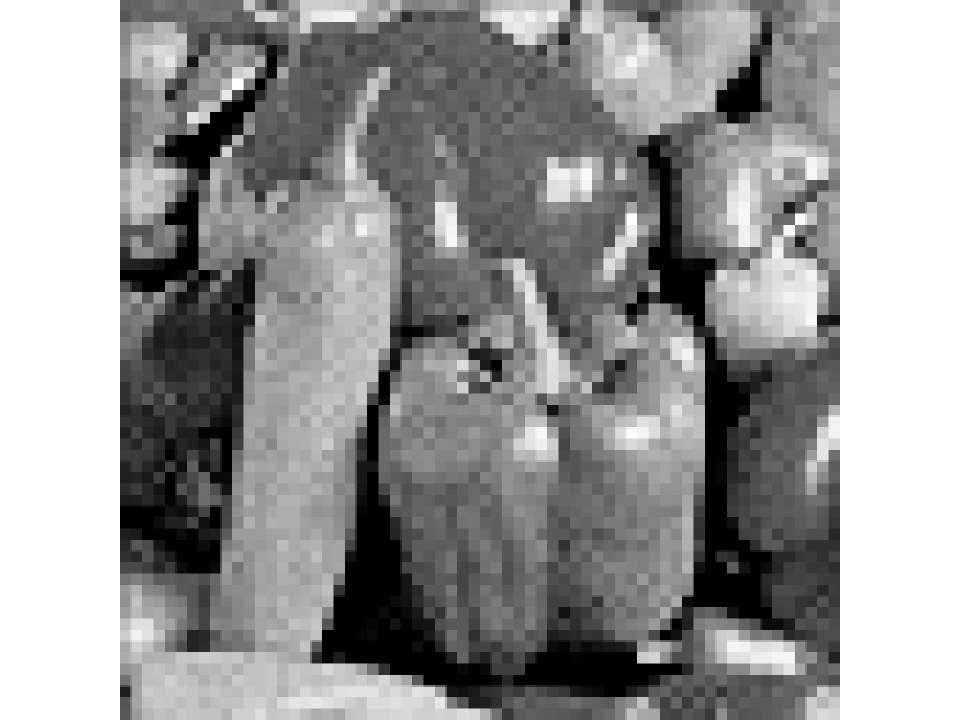} &
				\includegraphics[width=0.23\textwidth]{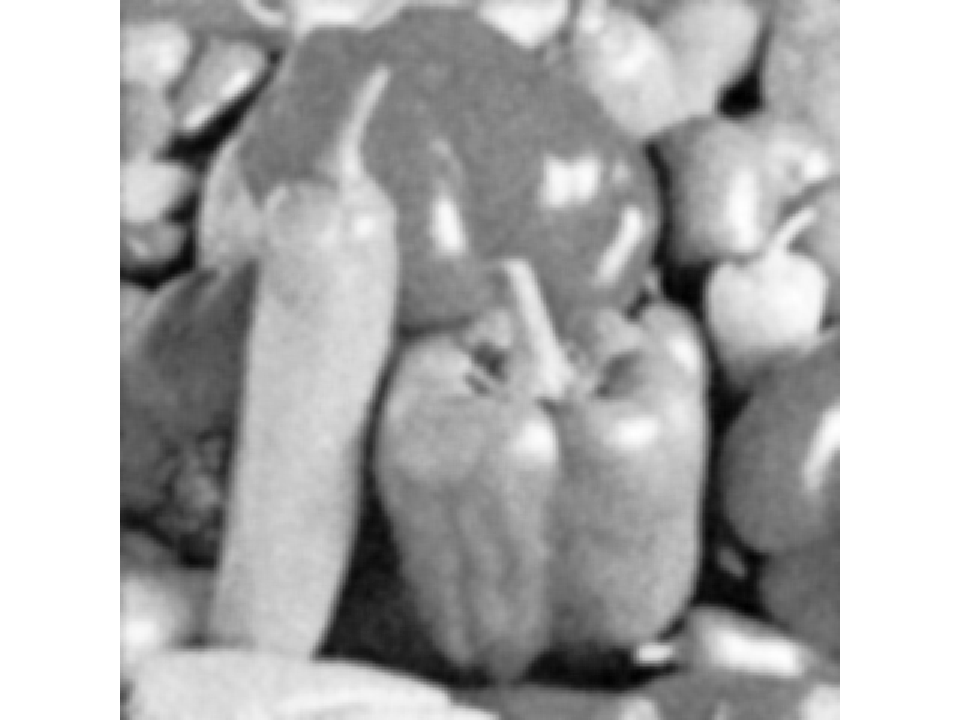}
				\\
				
				\includegraphics[width=0.23\textwidth]{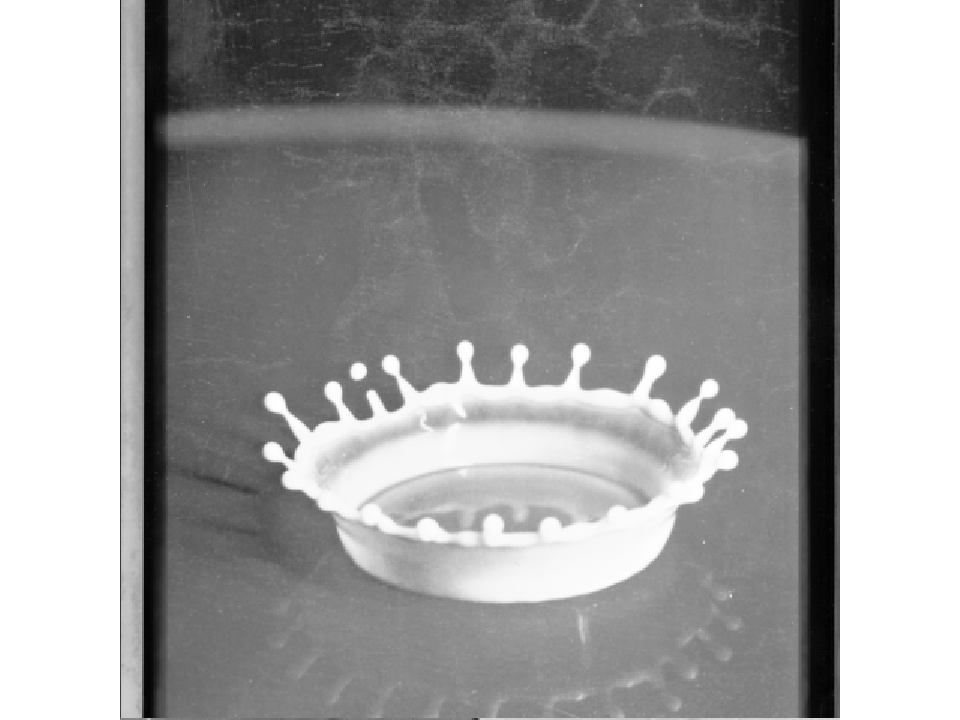} &
				\includegraphics[width=0.23\textwidth]{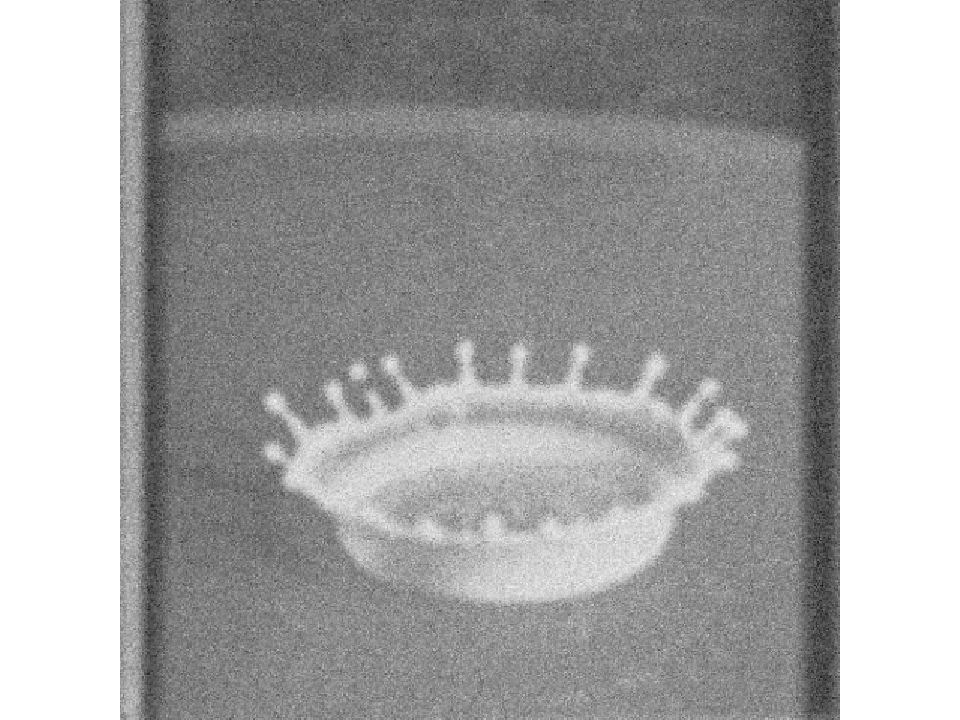} &
				\includegraphics[width=0.23\textwidth]{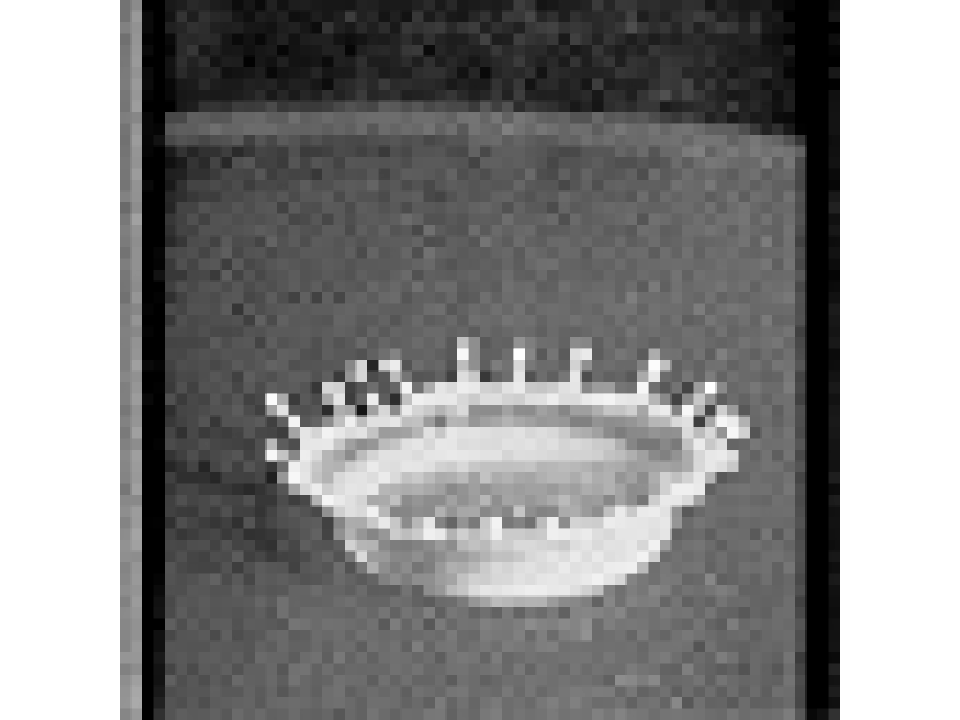} &
				\includegraphics[width=0.23\textwidth]{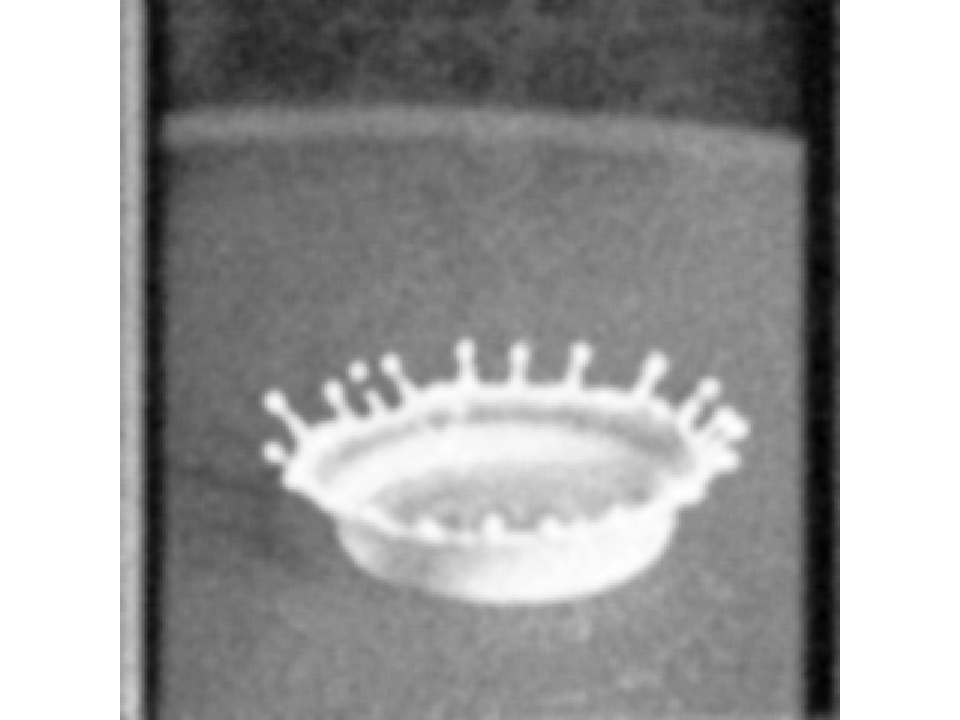}
				\\
				
				\includegraphics[width=0.23\textwidth]{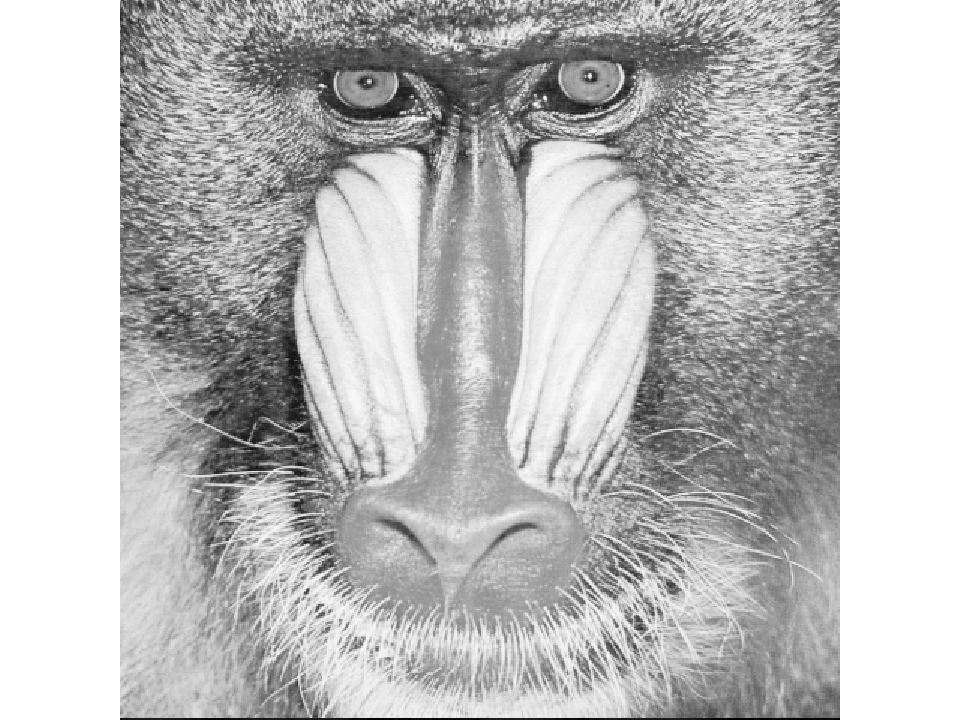} &
				\includegraphics[width=0.23\textwidth]{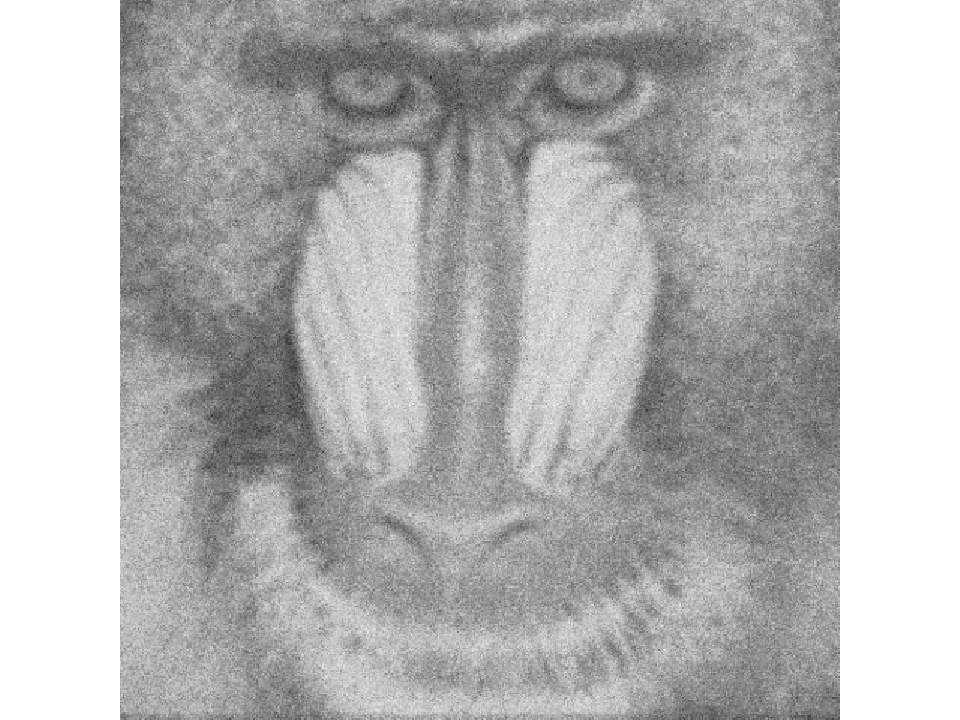} &
				\includegraphics[width=0.23\textwidth]{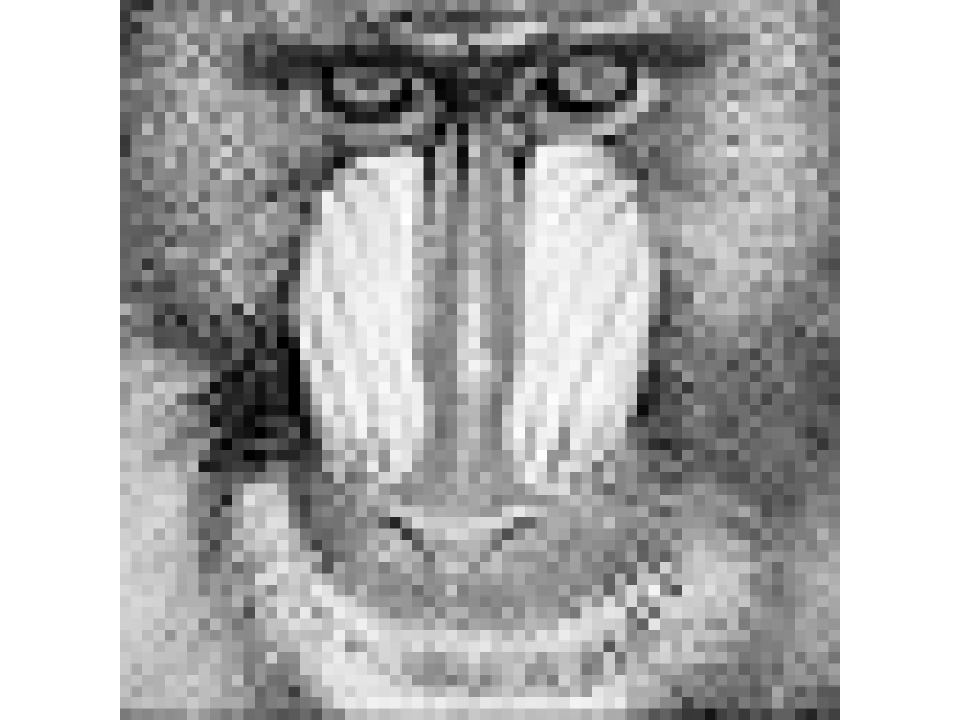} &
				\includegraphics[width=0.23\textwidth]{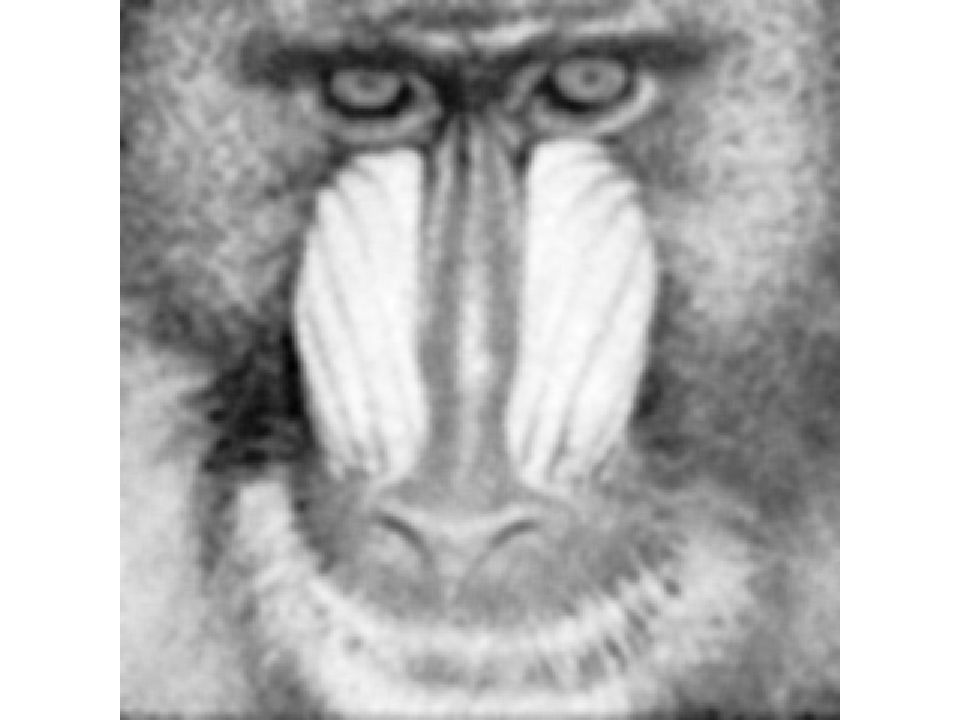}
			\end{tabular}
			
			\caption{
				Denoising results for \texttt{peppers}, \texttt{splash}, and \texttt{mandrill} images.
				The noisy observations were generated by adding Gaussian noise with
				standard deviation $\sigma=10^{-1}$.
				The third and fourth columns show the recovered images obtained with
				regularization parameters $\omega=1$ and $\omega=10$, respectively.
			}
			\label{fig:denoise}
		\end{figure}
		
		For one of the pictures \texttt{(mandrill)}, we plot the optimality values given by the quantities 
		\[  \Phi_k = F(x_k)-F(x^*)-\inner{\nabla F(x^*)}{x_k-x^*}, \qbox{and} \Gamma_k =g^*(y_k)-g^*(y^*)-\inner{\mathcal{M}x^*}{y_k-y^*},\]
		which vanish thanks to Theorem \ref{T:implicit_inexact} and \eqref{eq:decay_Phi_Gamma}. We also plot the relative error given by 
		\[\mathcal{R}:(z_{k+1},z_{k}) \mapsto \dfrac{\norm{z_{k+1}-z_{k}}}{\norm{z_{k}}},\]
		where $z_k = (x_k,y_k)$. Figure \ref{fig:mandril_plots} shows the plot of the previous measures for Algorithms \ref{alg:alg_xkplus1_wavelets} and \ref{alg:alg_xk_wavelets} after 300 iterations. 
		\begin{figure}[h]
			\centering
			\subfigure[Optimality values, $\omega=1$]{\includegraphics[width=0.48\textwidth]{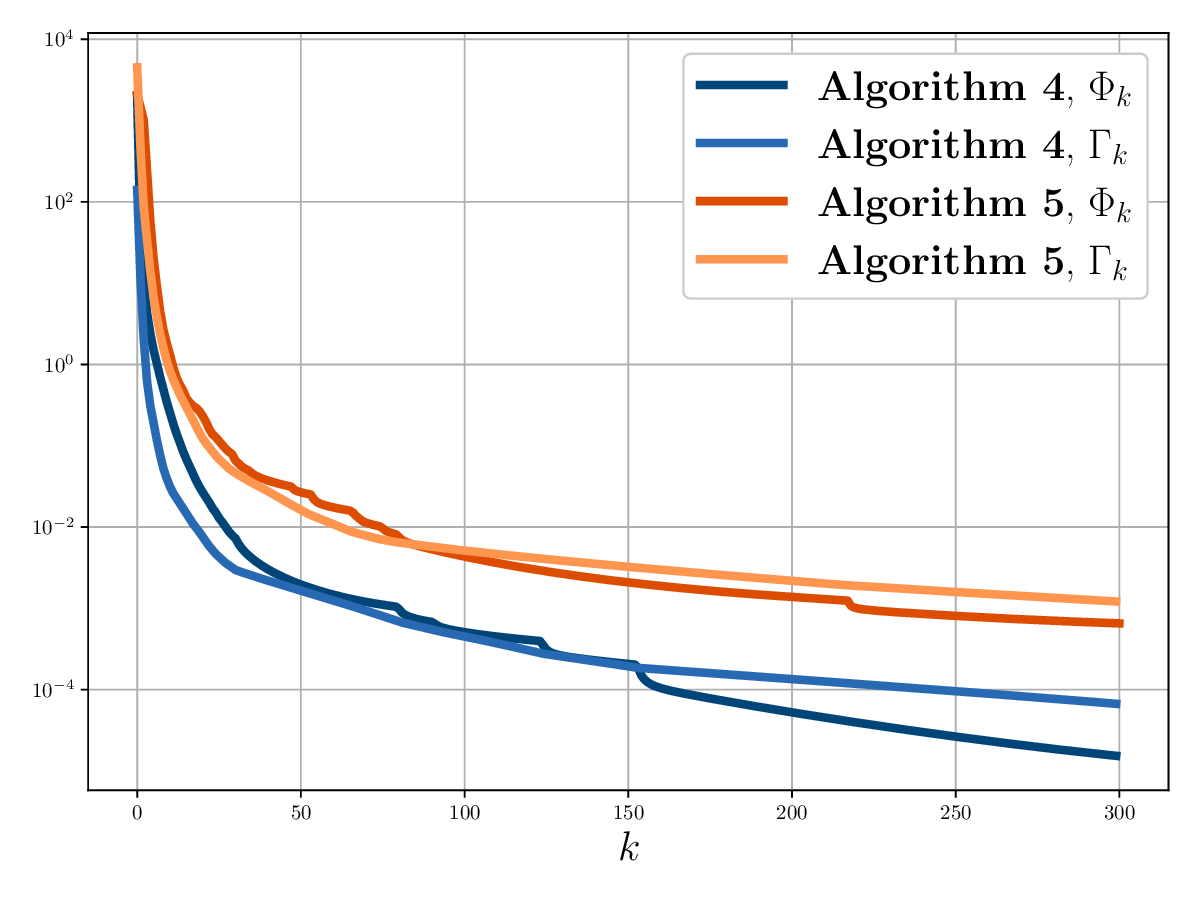}}
			\subfigure[Relative error, $\omega=1$]{\includegraphics[width=0.48\textwidth]{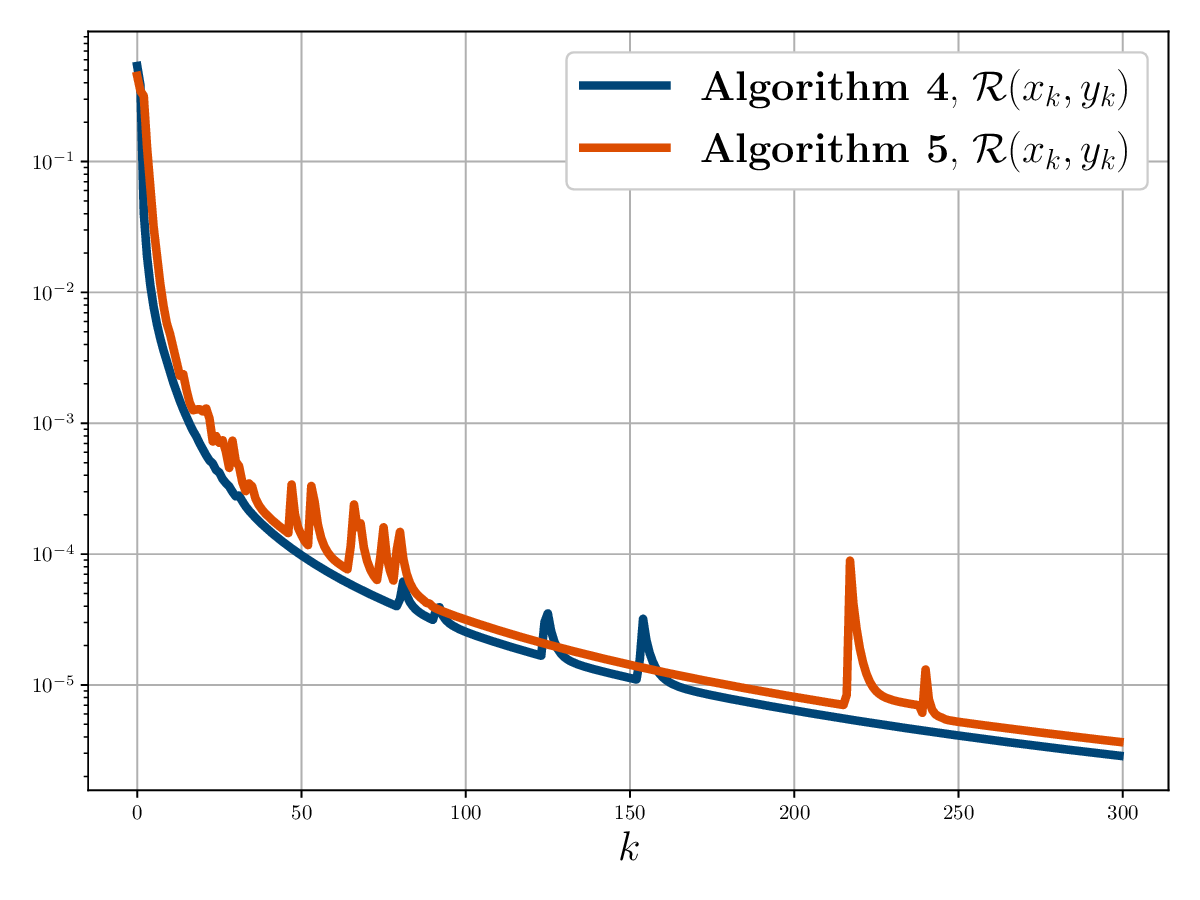}}\\
			\subfigure[Optimality values, $\omega=10$]{\includegraphics[width=0.48\textwidth]{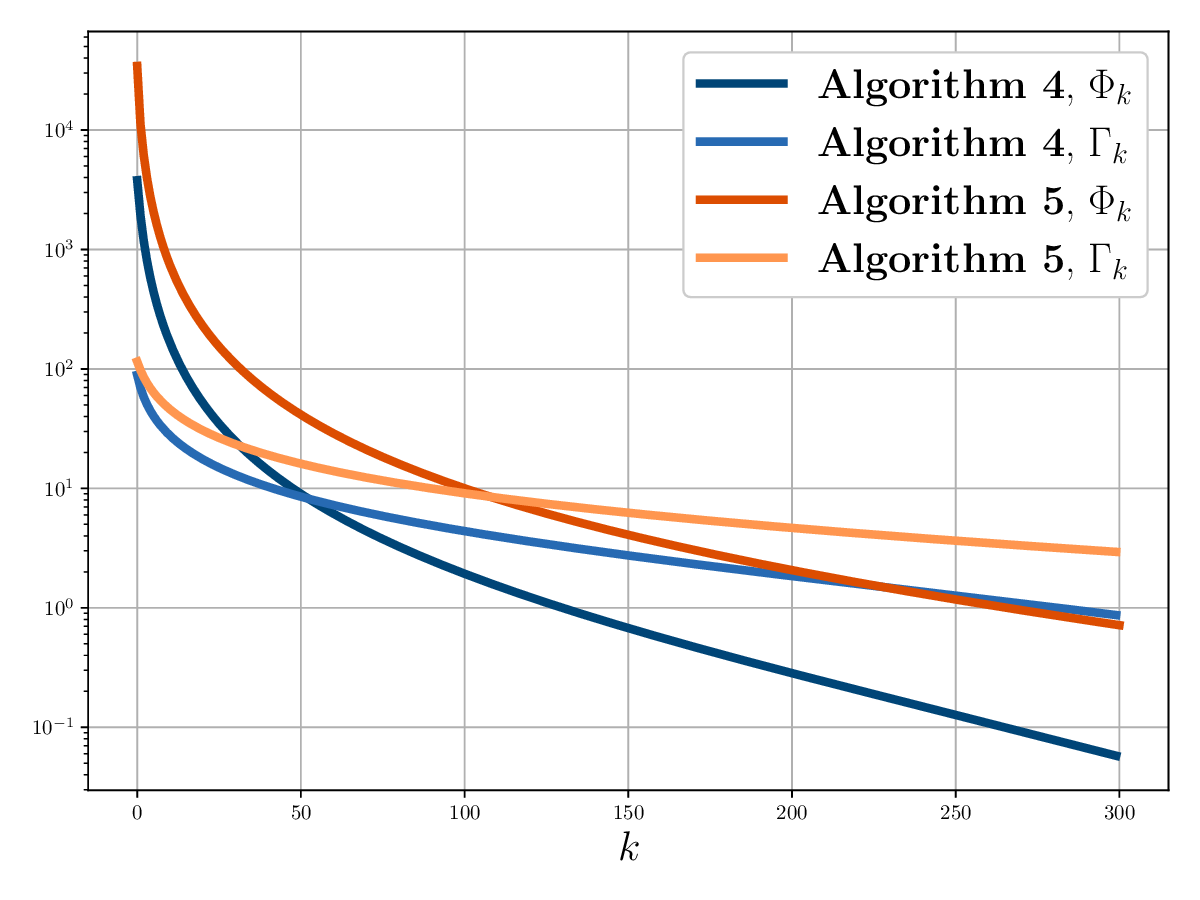}}
			\subfigure[Relative error, $\omega=10$]{\includegraphics[width=0.48\textwidth]{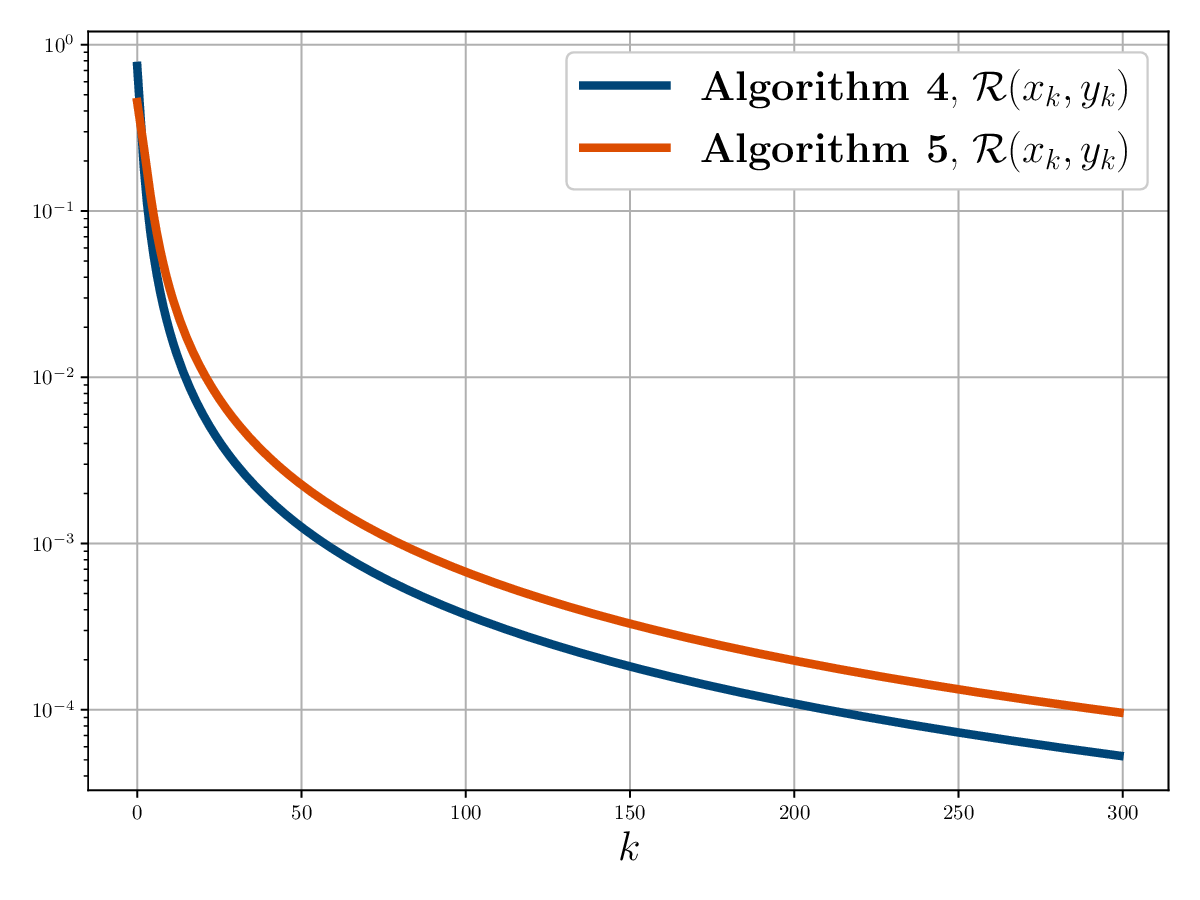}}
			\caption{Optimality measures and relative error for \texttt{mandrill}, 300 iterations. }
			\label{fig:mandril_plots}
		\end{figure}
		\section*{Acknowledgments}
		This research benefited from the support of the FMJH Program Gaspard Monge
		for optimization and operations research and their interactions with data science.  The second author was supported by ANID-Chile, Fondecyt Postdoctorado Grant 3250609, and Centro de Modelamiento Matemático (CMM) BASAL fund FB210005 for centers of excellence.
		\bibliographystyle{abbrv}
		\bibliography{references}

		\appendix
		\renewcommand{\thesection}{\Alph{section}}
		
		\section{Appendix}
		
		\subsection{Derivation of the function $H_k$}\label{appendix:function_H}
		Let us recall that
		\[H_{k}(y)=\delta_{k}y-\tilde\beta_{k}A\prox_{\eta_{k}f}\left(v_{k}-\eta_{k}\tilde\beta_kA^*y\right)-w_{k}.\]
		\begin{lemma}
			Let 
			\begin{align*}
				\mathcal{H}_{k}(y):=&\frac{\delta_{k}}{2}\|y\|^2-\langle w_{k},y\rangle+f^*\left(\prox_{f^*/\eta_{k}}(v_{k}/\eta_{k}-\tilde\beta_kA^*y)\right)\\
				&+\frac{1}{2\eta_{k}}\left\|\prox_{\eta_{k}f}\left(v_{k}-\eta_{k}\tilde\beta_kA^*y\right)\right\|^2.    
			\end{align*} 
			It holds that $\nabla \mathcal{H}_{k}(y) = H_k(y)$. 
		\end{lemma}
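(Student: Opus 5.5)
The plan is to reduce $\mathcal H_k$ to a quadratic in $y$ plus a composition of an explicitly differentiable function with the affine map $y\mapsto u(y):=v_k-\eta_k\tilde\beta_kA^*y$. The first two terms $\frac{\delta_k}{2}\|y\|^2-\langle w_k,y\rangle$ have gradient $\delta_ky-w_k$. So it suffices to show that
\[
\varphi(u):=f^*\big(\prox_{f^*/\eta_k}(u/\eta_k)\big)+\frac{1}{2\eta_k}\big\|\prox_{\eta_kf}(u)\big\|^2
\]
is differentiable with $\nabla\varphi(u)=\frac{1}{\eta_k}\prox_{\eta_kf}(u)$. The chain rule then gives
\[
\nabla_y\big[\varphi(u(y))\big]=-\eta_k\tilde\beta_kA\,\nabla\varphi(u(y))=-\tilde\beta_kA\prox_{\eta_kf}(u(y)),
\]
and adding the quadratic part yields exactly $H_k(y)$.

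To identify $\varphi$, I will fix $u$ and set $p:=\prox_{\eta_kf}(u)$. By Moreau's identity (recalled in Section \ref{SS:Semismooth}), $q:=\prox_{f^*/\eta_k}(u/\eta_k)=(u-p)/\eta_k$. The optimality condition of the prox gives $q\in\partial f(p)$. The Fenchel--Young equality case therefore yields $f^*(q)=\langle q,p\rangle-f(p)$. Substituting into $\varphi$ and completing the square gives
\[
\varphi(u)=\frac{\langle u-p,p\rangle}{\eta_k}+\frac{\|p\|^2}{2\eta_k}-f(p)=\frac{\|u\|^2}{2\eta_k}-\Big[f(p)+\frac{1}{2\eta_k}\|u-p\|^2\Big]=\frac{\|u\|^2}{2\eta_k}-e_{\eta_kf}(u),
\]
where $e_{\eta_kf}$ denotes the Moreau envelope of $f$ with parameter $\eta_k$.

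Next I will invoke the classical fact that, for $f$ proper, convex and lower semicontinuous, $e_{\eta_kf}$ is Fréchet differentiable with $\nabla e_{\eta_kf}(u)=\frac{1}{\eta_k}\big(u-\prox_{\eta_kf}(u)\big)$, and that this gradient is $1/\eta_k$-Lipschitz. This immediately gives $\nabla\varphi(u)=\frac{u}{\eta_k}-\frac{u-p}{\eta_k}=\frac{p}{\eta_k}$, as required. It also shows that $\mathcal H_k$ is convex: $\varphi=\frac{1}{\eta_k}\big(\tfrac12\|\cdot\|^2\big)^{\!*}$-type structure is confirmed by $\varphi=\frac{1}{\eta_k}\big(\eta_kf\big)^*\!\square$-envelope, more simply by the standard fact that $\frac{1}{2\eta_k}\|\cdot\|^2-e_{\eta_kf}$ is convex since its gradient $\prox_{\eta_kf}/\eta_k$ is monotone. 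Composition with an affine map and adding $\frac{\delta_k}{2}\|y\|^2$ preserve convexity, which justifies the line-search remark in the text.

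The only genuinely delicate point is the differentiability of the Moreau envelope together with its gradient formula. I will cite it as standard rather than reprove it. If a self-contained argument is wanted, it follows from the two-sided estimate
\[
\langle u'-u,\,(u-p)/\eta_k\rangle\le e_{\eta_kf}(u')-e_{\eta_kf}(u)\le\langle u'-u,\,(u-p)/\eta_k\rangle+\frac{1}{2\eta_k}\|u'-u\|^2.
\]
The upper bound comes from testing $p$ in the infimum defining $e_{\eta_kf}(u')$. The lower bound uses the subgradient inequality for $f$ at $p$ together with $q\in\partial f(p)$, and the nonnegativity of the resulting square.
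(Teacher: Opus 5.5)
Your argument is correct. It differs from the paper's mainly in which Moreau envelope you recognize.

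The paper reads the last two terms of $\mathcal H_k$ directly as $\operatorname{e}_{1/\eta_k}f^*$, the envelope of the conjugate, evaluated at $v_k/\eta_k-\tilde\beta_kA^*y$. By definition that envelope is attained at $\prox_{f^*/\eta_k}$, and the rescaled Moreau identity converts its quadratic part into $\frac{1}{2\eta_k}\|\prox_{\eta_kf}(\cdot)\|^2$. A second use of the same identity turns $\nabla\operatorname{e}_{1/\eta_k}f^*(w)=\eta_k\big(w-\prox_{f^*/\eta_k}(w)\big)$ into $\prox_{\eta_kf}(\eta_kw)$.

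You instead use Moreau's identity once, together with the Fenchel--Young equality $f^*(q)=\langle q,p\rangle-f(p)$ for $q\in\partial f(p)$. This lets you write the same function as $\frac{1}{2\eta_k}\|u\|^2-\operatorname{e}_{\eta_k}f(u)$, working with the envelope of $f$ rather than of $f^*$. The two representations agree by the classical identity $\operatorname{e}_{\eta_k}f(u)+\operatorname{e}_{1/\eta_k}f^*(u/\eta_k)=\frac{1}{2\eta_k}\|u\|^2$.

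Your route has two advantages. The gradient appears directly in terms of $\prox_{\eta_kf}$, with no back-conversion. You also give a self-contained proof of the envelope gradient formula, which the paper only recalls. Your two-sided estimate is right: the upper bound follows by testing $p$ in the infimum, and the lower bound reduces to $\frac{1}{2\eta_k}\|(p'-u')-(p-u)\|^2\ge 0$ with $p'=\prox_{\eta_kf}(u')$.

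The paper's route makes convexity of $\mathcal H_k$, which it invokes for the line search, immediate, since the envelope of the convex function $f^*$ is convex. In your representation you need the extra observation that $\prox_{\eta_kf}/\eta_k$ is a monotone gradient. That observation is fine. However, the half-sentence preceding it in your proposal, about a conjugate/envelope-type structure, is garbled and should simply be dropped.
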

		\begin{proof}
			First, let us recall the Moreau identity: for every $\lambda >0 $ and every convex function $\phi$, it holds
			\[\prox_{\lambda \phi} (x)  + \lambda \prox_{\frac{\phi^*}{\lambda}}\left( \frac{x}{\lambda}\right) = x .\]
			Rescaling, we obtain
			\begin{equation}\label{eq:moreau_prox}
				\prox_{\lambda \phi} (\lambda x) = \lambda \left( x - \prox_{\frac{\phi^*}{\lambda}}\left( x\right) \right).
			\end{equation}
			Let us recall the definition of the \textit{Moreau envelope} of a convex function $\phi$ with parameter $\lambda > 0$:  
			\[\operatorname{e}_{\lambda}\phi(u) =\min_{z} \left\lbrace \phi(z)  + \frac{1}{2\lambda}\Vert z - u\Vert^2 \right\rbrace,\]
			where the minimizer is attached exactly at $z= \prox_{\lambda \phi} (x)$. Let us recall that the envelope is continuously differentiable, and its gradient can be computed as 
			\[\nabla \operatorname{e}_{\lambda}\phi(u) = \frac{1}{\lambda} \left( u - \prox_{\lambda \phi} (u)\right).\]
			Notice that the last two terms in the definition of $\mathcal{H}_k$ can be expressed as a Moreau envelope using \eqref{eq:moreau_prox}, meaning 
			\begin{align*}
				\operatorname{e}_{\frac{1}{\eta_k}}f^*(u) &= f^*\left( \prox_{\frac{1}{\eta_k}f^*}(u) \right) + \frac{\eta_k}{2}\Vert \prox_{\frac{f^*}{\eta_k}}(u) - u\Vert^2 \\
				&= f^*\left( \prox_{\frac{1}{\eta_k}f^*}(u) \right) + \frac{1}{2\eta_k}\Vert \prox_{\eta_k f}(\eta_k u) \Vert^2.
			\end{align*}
			Then, 
			\begin{equation}\label{eq:Hk_envelope}
				\mathcal{H}_k(y) = \frac{\delta_{k}}{2}\|y\|^2-\langle w_{k},y\rangle + \operatorname{e}_{\frac{1}{\eta_k}}f^*(q_k),
			\end{equation}
			with $q_k = \frac{v_k}{\eta_k} - \tilde\beta_kA^* y$. Using the gradient formula for the envelope and \eqref{eq:moreau_prox}, we get
			\[\nabla \operatorname{e}_{\frac{1}{\eta_k}}f^*(u) = \eta_k \left( u - \prox_{\frac{f^*}{\eta_k}}(u)\right) = \prox_{\eta_k f}\left( \eta_k u\right).\]
			Notice that since $q_k$ depends on $y$, when calculating the gradient of $\mathcal{H}_k$ we must consider also $\nabla_yq_k = -\tilde\beta_kA$. Collecting the previous, computing the gradient in \eqref{eq:Hk_envelope} matches the exact definition of $H_k(y)$. 
		\end{proof}
	\end{document}